

\documentclass[preprint,12pt]{elsarticle}

\usepackage{amsmath,amsthm,amssymb, latexsym,mathrsfs}



\usepackage{amssymb}


 \newtheorem{thm}{Theorem}[section]
 
 \newtheorem{lem}[thm]{Lemma}
 \newtheorem{prop}[thm]{Proposition}
 \theoremstyle{definition}

 \newtheorem{rmk}[thm]{Remark}
 \newtheorem{ass}[thm]{Assumption}
 \numberwithin{equation}{section}
\newcommand{\C}{{\mathbb C}}

\newcommand{\Z}{{\mathbb Z}}
\newcommand{\diag}{\text{\rm diag}}
\newcommand{\sgn}{\text{\rm sgn}}


\begin{document}

\begin{frontmatter}


 \tnotetext[label1]{}
 \author{Bertin Zinsou\corref{cor1}\fnref{label2}}
 \ead{bertin.zinsou@wits.ac.za}
 \ead[url]{www.wits.ac.za}
 \cortext[cor1]{}

\title{Stability of a flexible missile described by asymptotics of the eigenvalues of fourth order  boundary value   problems}



\address{School of Mathematics, University of the Witwatersrand, Private Bag 3 Wits 2050, Johannesburg, South Africa }

\begin{abstract}

Fourth order problems, with the differential equation $y^{(4)}-(gy')'=\lambda^2y$, where $g\in C^1[0,a]$ and $a>0$, occur  in engineering on stability of elastic rods. They occur as well in aeronautics to describe the stability of a flexible missile. 
Fourth order Birkhoff regular problems  with the differential equation $y^{(4)}-(gy')'=\lambda^2y$  and  eigenvalue dependent boundary conditions  are considered. These problems have quadratic  operator representations  with non self-adjoint operators. The first four terms of the asymptotics of the eigenvalues of the problems  as well as  those of the eigenvalues of the  problem describing the stability of a flexible missile are   evaluated explicitly.\end{abstract}

\begin{keyword}
Fourth order  problems, Birkhoff regularity,   boundary conditions, quadratic operator pencil, eigenvalue distribution,     asymptotics of   eigenvalues, stability of a flexible missile.


  \MSC[2010] 34L20   \sep  34L07   34B08 \sep  34B09
\end{keyword}

\end{frontmatter}

\section{Introduction}
 Higher order ordinary differential operators occur    in applications with or without the eigenvalue parameter in the boundary conditions. Such problems are realized as operator polynomials, also called operator pencils. Some recent developments of higher order differential operators whose boundary conditions may depend on the eigenvalue parameter have been investigated in \cite{ MoletZin, MolPiv,MolZin,MolZin1,MolZin4,MolZin5,wan, Zin,  MolZin6}.

Problems like the generalized  Regge problem, the stability of elastic rod     problems and the vibrating curve  problems have boundary conditions with partial first derivatives with respect to the time variable $t$ or whose mathematical model leads to an eigenvalue problem with the eigenvalue parameter $\lambda$ occurring linearly in the boundary conditions. Such problems have an operator representation of the form
\begin{equation}\label{L(lambda)}L(\lambda)=\lambda^2M-i\lambda K-A\end{equation}
in the Hilbert space $H=L_2(I)\oplus\mathbb{C}^k$, where $I$ is an interval,  $k$ the number of eigenvalue dependent boundary conditions, $M$, $K$ and $A$ are coefficient operators.

Separation of variables leads  the stability of elastic rod  problems investigated in \cite{MoletZin, MolPiv, MolZin, MolZin1, Zin, MolZin6} to fourth order eigenvalue problems with eigenvalue parameter dependent boundary conditions, where the differential equation 
\begin{equation}\label{difEq}y^{(4)}-(gy')'=\lambda^2y\end{equation} depends on the eigenvalue parameter. The  fourth order problem  with the differential equation \eqref{difEq} and the boundary conditions $y''(0)=y^{(3)}(0)=0$ and  $y''(a)=y^{(3)}(a)=0$   describes the stability of a flexible missile, see \cite{Beal,Guran,  KirilSeyra}. This problem can be represented by the operator polynomial 
\begin{equation}\label{L(lambdaFlex)}L(\lambda)=\lambda^2M-A\end{equation}
in the Hilbert space $L_2(I)$.

 In \cite{MolZin} we have investigated a class of boundary conditions for which necessary and sufficient conditions have been obtained such that the associated operator pencil consists of self-adjoint operators, while in \cite{MolZin1} we have continued the work of \cite{MolZin} in the direction of \cite{MolPiv} to  derive eigenvalue asymptotics associated with boundary conditions which do not depend on the eigenvalue parameter at the left endpoint and depend on the eigenvalue parameter at the right endpoint. Note that the problems investigated in \cite{MolPiv, MolZin1} are Birkhoff regular. In \cite{Zin} we have investigated a class of boundary conditions for which necessary and sufficient conditions have been obtained such that the associated operator pencil is Birkhoff regular. 

In this paper we extend the work of \cite{MolZin1} to   classes of Birkhoff regular problems  where the coefficient operators $K$ and $A$  of the associated quadratic operator pencil are not necessary self-adjoint.

We give a characterization of   fourth order Birkhoff regular  problems  in Section \ref{BikSec}. In  Section \ref{penc} we present  the quadratic  operator pencil under consideration  as well as the boundary conditions  that will be investigated. In Section \ref{s:g=0}  we classify  the eigenvalue dependent boundary problems under consideration in two different classes according to the right endpoint boundary conditions and we derive the eigenvalue asymptotics for the case $g=0$. As these problems     are  Birkhoff regular, then the eigenvalues for general $g$ are small perturbations of those for $g=0$. Hence in Section \ref{gen:g}  we use the eigenvalue asymptotics for $g=0$ to provide the first four terms of the eigenvalue asymptotics of the two relevant   classes and  we compare the results obtained  to those obtained  in \cite{MolZin1}. Finally in Section 
\ref{flex} we give the asymptotics of the eigenvalues of  the problem describing the stability of a flexible missile. 


\section{Fourth order Birkhoff regular problems}\label{BikSec}

On the interval $[0,a]$, we    consider the eigenvalue problem
 \begin{gather}\label{eq:2inteq0} y^{(4)} - (gy')' = \lambda^2y,\\
B_j(\lambda)y = 0,\ j=1,2,3,4,\label{eq:2cneq1}\end{gather}
where    $g \in C^1[0, a]$, $a>0$, is a real valued function and \eqref{eq:2cneq1}  are  separated  boundary conditions   independent of $\lambda$ or depending on $\lambda$ linearly.
We assume  that    
 
\begin{equation}\label{quasi}B_j(\lambda)y=y^{[p_j]}(a_j)+ i\beta_j\lambda y^{[q_j]}(a_j),\end{equation}
where $a_j=0$ for $j=1,2$ and $a_j=a$ for $j=3,4$, with $0\le q_j<p_j\le 3$, for  $\beta_j\in\mathbb{C}\setminus\{0\}$  while  $\beta_j=0$ corresponds to $q_j=-\infty$, $j=1,2,3,4$. 

We recall that the quasi-derivatives associated to \eqref{eq:2inteq0}  are given by 
\begin{align}\label{quasiEq}y^{[0]}=y,~y^{[1]}=y', ~ y^{[2]}=y'', ~ y^{[3]}=y^{(3)}-gy',~y^{[4]}=y^{(4)}-(gy')',\end{align}  see \cite[Definition 10.2.1, page 256]{MolPiv1}.

Recall that in applications, using separation of variables, the parameter $\lambda$ emanates from derivatives with respect to the time variable in the original partial differential equation, and it is reasonable that the highest space derivative occurs in the term without time derivative. Thus the most relevant boundary conditions would have $q_j<p_j$ for $j=1,2,3,4$.

 We define
 \begin{gather}\label{theta}\Theta_1 = \left\{s \in \{1, 2, 3, 4\} : B_s(\lambda)  \textrm{ depends on } \lambda \right\},
 \ \Theta_0 = \{1, 2, 3, 4\}\backslash \Theta_1,  \\\label{theta0}
\Theta_1^0 = \Theta_1\cap \{1, 2\}, \quad \Theta_1^a = \Theta_1\cap \{3, 4\}, \end{gather}
and
\begin{gather}\label{lambda}\Lambda= \left\{s \in \{1, 2, 3, 4\} : p_s>-\infty \right\}, ~ \Lambda^0=\Lambda\cap\{1,2\},~ \Lambda^a=\Lambda\cap\{3,4\}.\end{gather}

 \begin{ass}\label{ass:ass1}
We assume that   the numbers $p_s$ for $s\in\Lambda^0$,  $q_j$  for $j \in \Theta_1^0$   are distinct and
that   the numbers $p_s$ for $s\in\Lambda^a$,   $q_j$ for $j \in \Theta_1^a$ are distinct.
\end{ass}
Assumption \ref{ass:ass1} means that for any pair $(r, a_j)$ the term $y^{[r]}(a_j)$ occurs at most once in the boundary conditions \eqref{eq:2cneq1} and  that the numbers $q_j$, $p_j$, $j=1,2,3,4$ are mutually disjoint.

Let $p_j,q_j\in\{0,1,2,3\}$, where $p_j,q_j$ are as defined in Assumption \ref{ass:ass1}, $j=1,2,3,4$. Let $u$ such that $u=0$ if $j=1,2$ and $u=1$ if $j=3,4$. Let $C(r,u)$, $r=1,2,3,4,5$, $u=0,1$, be the following conditions:\\
$C(1,u)$: $p_{1+2u}>q_{1+2u}+2$, $p_{2+2u}>q_{2+2u}+2$;\\ 
$C(2,u)$: $p_{1+2u}>q_{1+2u}+2$, $q_{2+2u}+2> p_{2+2u}$;\\
 $C(3,u)$: $p_{1+2u}>q_{1+2u}+2$,   $ p_{2+2u}=q_{2+2u}+2$ and  
$\beta_{2+2u}\ne (-1)^{l},$  where $l=1,2$;\\
$C(4,u)$: $q_{1+2u}+2> p_{1+2u}$, $q_{2+2u}+2> p_{2+2u}$; \\
 $C(5,u)$: $q_{1+2u}+2> p_{1+2u}$, $p_{2+2u}=q_{2+2u}+2$,
\begin{gather*}\beta_{2+2u}\ne\begin{cases}(-1)^{l+1}  \quad \textrm{if}\quad q_{1+2u}-q_{2+2u}=1,\\   (-1)^{l}  \qquad \textrm{if}\quad q_{1+2u}-q_{2+2u}=3,\end{cases}\end{gather*}   where $l=1,2$.

For the boundary conditions \eqref{eq:2cneq1} and the assumptions made so far, \cite[Theorem 3.4]{Zin} leads to the following.    
\begin{prop}\label{main}  The problem  \eqref{eq:2inteq0}, \eqref{eq:2cneq1} is  Birkhoff regular if and only if there are $r_0, r_1\in\left\{1,2,3,4,5 \right\}$ such that the conditions $C(r_0,0)$ and $C(r_1,1)$ hold.\end{prop}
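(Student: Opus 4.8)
The plan is to obtain Proposition~\ref{main} by specialising the general Birkhoff-regularity criterion \cite[Theorem 3.4]{Zin} to the boundary forms \eqref{quasi} and reading off the finite case distinction that Assumption~\ref{ass:ass1} keeps under control. I would first recall that Birkhoff regularity of \eqref{eq:2inteq0}--\eqref{eq:2cneq1} depends only on the principal part $y^{(4)}=\lambda^{2}y$ — the term $(gy')'$ is of lower order both in the spectral parameter and in the characteristic root and does not enter — whose characteristic roots are the four numbers $\rho=i^{k}\rho_{0}$, $k=0,1,2,3$, with $\rho_{0}$ a fixed branch of $\sqrt{\lambda}$. In each Birkhoff sector one forms the matrix whose rows are the $B_{j}(\lambda)$ evaluated on an asymptotic fundamental system and requires the two $2\times2$ minors built from the roots dominant on the two sides of the sector to be nonzero; since the boundary conditions \eqref{eq:2cneq1} are separated, that matrix is block diagonal, so regularity is equivalent to an independent condition at $x=0$ (on $B_{1},B_{2}$, i.e. $u=0$) and at $x=a$ (on $B_{3},B_{4}$, i.e. $u=1$). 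This is exactly the origin of the ``there exist $r_{0},r_{1}$'' shape of the statement, and it reduces the whole matter to a single endpoint — precisely the content packaged by \cite[Theorem 3.4]{Zin}.

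For one endpoint I would read off, along a root $\rho=i^{k}\rho_{0}$, the leading term of $B_{j}(\lambda)y=y^{[p_{j}]}(a_{j})+i\beta_{j}\lambda y^{[q_{j}]}(a_{j})$: the derivative term is of order $\rho_{0}^{p_{j}}$ with coefficient $i^{kp_{j}}$, while $i\beta_{j}\lambda y^{[q_{j}]}$ is of order $\rho_{0}^{q_{j}+2}$ with coefficient $i\beta_{j}i^{kq_{j}}$. Hence the decisive comparison is between $p_{j}$ and $q_{j}+2$, with three regimes: $p_{j}>q_{j}+2$ (derivative term dominant, leading coefficient a power of $i$), $q_{j}+2>p_{j}$ ($\lambda$-term dominant, leading coefficient $i\beta_{j}$ times a power of $i$), and the resonant case $p_{j}=q_{j}+2$ (leading coefficient $i^{kq_{j}}\bigl((-1)^{k}+i\beta_{j}\bigr)$). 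Assumption~\ref{ass:ass1} makes $p_{1+2u},p_{2+2u}$, and, where defined, $q_{1+2u},q_{2+2u}$, mutually distinct; I would then verify that the endpoint condition from \cite[Theorem 3.4]{Zin}, rewritten in this parametrisation, is equivalent to the disjunction of the five mutually exclusive patterns $C(1,u),\dots,C(5,u)$ for the pair of regimes at that endpoint — both derivative-dominant; one of each; derivative-dominant and resonant; both $\lambda$-dominant; $\lambda$-dominant and resonant — with the degenerate reading ``$p_{j}>-\infty$'' absorbing the case $\beta_{j}=0$.

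Finally I would compute, in each pattern, the two relevant minors — the two, indexed by $l=1,2$ in the statement, coming from the two pairs of dominant roots $i^{k}\rho_{0}$ on the two sides of the sector, which for this equation are pairs of \emph{opposite} parity in $k$ (a fact cross-checked by the sign flip between the cases $q_{1+2u}-q_{2+2u}=1$ and $=3$ in $C(5,u)$). In $C(1,u),C(2,u),C(4,u)$ the two effective orders are distinct and each minor is a nonzero monomial: one needs only that, for $k_{1}-k_{2}$ odd, quantities of the form $(-1)^{k_{1}}-(-1)^{k_{2}}$ and $i^{(k_{1}-k_{2})m}-1$ with $m\in\{1,2,3\}$ never vanish, and so regularity holds with no restriction on $\beta$. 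In the resonant patterns $C(3,u)$ and $C(5,u)$ each of the two minors is an affine function of $\beta_{2+2u}$, and solving the two equations returns exactly the exceptional values listed in the statement — $(-1)^{1}$ and $(-1)^{2}$ in case $C(3,u)$, and the values selected by $q_{1+2u}-q_{2+2u}\in\{1,3\}$ in case $C(5,u)$. Assembling the analyses at $u=0$ and $u=1$ yields the equivalence. The step I expect to be the main obstacle is this last one: keeping the normalisation of $\rho_{0}$, the choice of Birkhoff sector, and the labelling of the dominant roots rigidly consistent so that the exceptional $\beta$-values come out as the stated $\pm1$'s rather than, say, $\pm i$'s — which is exactly the point at which \cite[Theorem 3.4]{Zin} has to be invoked in its precise stated form.
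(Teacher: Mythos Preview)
Your proposal is correct and takes the same route as the paper: the paper offers no proof of Proposition~\ref{main} at all, simply stating it as a direct consequence of \cite[Theorem 3.4]{Zin}, which is exactly the theorem you invoke and whose specialisation you sketch. Your outline of the leading-order analysis at each endpoint and the resulting case split into $C(1,u),\dots,C(5,u)$ is precisely what that reference carries out, so you are supplying (correctly) the detail that the present paper delegates to the citation.
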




\section{The quadratic operator pencil $L$}\label{penc}

We denote the collection of boundary conditions \eqref{eq:2cneq1} by $U$ and define the following operators related to $U$
\begin{gather}\label{OperU}U_ry=\begin{pmatrix} y^{[p_j] }(a_j)\end{pmatrix}_{j\in\Theta_r}, r=0,1, ~\textrm{and}~ V_1y=\begin{pmatrix} \beta_jy^{[q_j]}(a_j)\end{pmatrix}_{j\in\Theta_1},  \\ y\in W_2^4(0,a),\nonumber\end{gather}
where $W_2^4(0,a)$ is the Sobolev space of order $4$ on the interval $(0,a)$. 

We put $k=|\Theta_1|$ and we  consider the linear operators $A(U)$,  $K$  and $M$ in the space $L_2(0,a)\oplus \mathbb{C}^{k}$ with domains
\begin{align*}&\mathscr{D}(A(U))=\left\{\widetilde y=\begin{pmatrix}y\\V_1y\end{pmatrix}: y\in W_2^4(0,a),  U_0y=0\right\},\\&\mathscr{D}(K)=\mathscr{D}(M)=L_2(0,a)\oplus\mathbb{C}^k,\end{align*}
given by 
\begin{gather*}(A(U))\widetilde{y}=\left(\begin{matrix}y^{(4)}-(gy')'\\U_1y\end{matrix}\right) ~ \textrm{for } \widetilde y\in\mathscr{D}(A(U)), \\  M=\begin{pmatrix}I&0\\0&0\end{pmatrix} \quad ~\textrm{and}~\quad  K=\begin{pmatrix}0&0\\0&K_0\end{pmatrix} ~\textrm{with}~K_0=\diag(\beta_j : j\in\Theta_1).\end{gather*}
It is clear that $M$ and $K$ are bounded  operators and   $M$  is nonnegative and self-adjoint.   We associate a quadratic operator pencil
\begin{align}\label{OperaPencil}L(\lambda)=\lambda^2M- i  \lambda K-A(U), \quad \lambda\in\mathbb{C}\end{align}
in the space $L_2(0,a)\oplus\mathbb{C}^k$ with the problems \eqref{eq:2inteq0}, \eqref{eq:2cneq1}. We observe that \eqref{OperaPencil} is an operator representation of the eigenvalue problem \eqref{eq:2inteq0}, \eqref{eq:2cneq1} in the sense that a function $y$ satisfies \eqref{eq:2inteq0}, \eqref{eq:2cneq1} if and only if it satisfies 
$L(\lambda)\widetilde y=0$.

Note that if all the boundary conditions in \eqref{eq:2cneq1} are independent of $\lambda$, then $V_1y=0$ and $U_1y=0$, where $y\in W_2^4(0,a)$. Hence \eqref{OperaPencil} will be reduced to 
\begin{align}\label{OperaPencil0}L(\lambda)=\lambda^2M   -A(U), \quad \lambda\in\mathbb{C}\end{align} in the space $L_2(0,a)$.

We are going to investigate the asymptotics of the eigenvalues of  the classes of the boundary value problems where the boundary conditions at the left endpoint are independent of the parameter  $\lambda$, while the boundary conditions at the right endpoint depend or may not depend on the parameter. For the case $\beta_3\beta_4\ne0$, we are going   to compare the results of our investigation  to those obtained in the case  of self-adjoint problems studied in \cite{MolZin1}.
Hence the four  boundary conditions    \eqref{eq:2cneq1} are
\begin{gather}\label{case1}\begin{cases}y^{[p_1]}(0)=0,\quad y^{[p_2]}(0)=0,\\ y^{[p_3]}(a)+i\beta_3\lambda y^{[q_3]}(a)=0, \quad y^{[p_4]}(a)+i\beta_4\lambda y^{[q_4]}(a)=0,\end{cases}\end{gather} 
where  $0\le p_1<p_2\le 3$,   $0\le q_3<p_3\le 3$, $0\le q_4<p_4\le 3$  and  $0< p_3<p_4\le 3$. 
Therefore taking Assumption \ref{ass:ass1} into account,  we will distinguish the following different cases of boundary conditions at the endpoint $0$:\\
\begin{gather}\begin{cases} \rm Case ~ 1:(p_1, p_2)=(0,   1),  ~Case   ~2: ( p_1,p_2)=(0, 2), \\\rm Case   ~3:  (p_1,p_2)=(0,  3), ~
 Case  ~4: ( p_1,p_2)=(1,  2), \\\rm Case   ~5: (p_1,p_2)=(1,  3),~ Case   ~6: (p_1,p_2)=(2,   3). \end{cases}\end{gather}
However the boundary conditions at the right endpoint $a$ will be classified as
\begin{gather}\label{allclasses}\begin{cases}
 \rm Case^{(a)}~ 1: (p_3,q_3)=(1,0) ~and~  (p_4,q_4)=(3,2),\\
\rm Case^{(a)} ~2:  (p_3,q_3)=(2,1)  ~and  ~(p_4,q_4)=(3,0).\end{cases}\end{gather}
 As we have 2 sets of  boundary conditions at the endpoint $a$ and 6 sets of  boundary conditions at the endpoint $0$, then we have 12 sets of boundary conditions in total. We are going to  classify these 12 sets of boundary conditions according to the endpoint $a$. Hence we will have 2  classes of boundary conditions that we are going to classify by the pair $(p_j,q_j)$, $j=3,4$, see \eqref{allclasses}.

Define the condition $C'(2,u)$: $p_{1+2u}<q_{1+2u}+2$, $q_{2+2u}+2< p_{2+2u}$, $u=0,1$. Note that the conditions $C(2,u)$ and $C'(2,u)$, $u=0,1$  are redundant, see \cite[page 5]{Zin}. Hence for $u=0,1$, any result that is valid for $C(2,u)$, the equivalent result  is valid for $C'(2,u)$,  as  well.

Note that the left endpoint boundary conditions satisfy the condition $C(1,0)$, while the right endpoint boundary conditions satisfy the conditions $C(4,1)$ for Case$^{(a)}$ 1 and   the condition $C'(2,1)$ for Case$^{(a)}$ 2. Whence the problems are Birkhoff regular  for the classes  Case$^{(a)}$ 1 and Case$^{(a)}$ 2,    see Proposition \ref{main}.

We are going to investigate as well the asymptotics of the eigenvalues of the problem describing the stability of a flexible missile, where the boundary conditions are $y''(0)=y^{(3)}(0)=0$ and  $y''(a)=y^{(3)}(a)=0$. 

Note that the left endpoint boundary conditions of this problem satisfy the condition $C(1,0)$, while  the right endpoint boundary conditions satisfy the condition  $C(1,1)$. Hence the problem is Birkhoff regular according to Proposition  \ref{main}.



 \section{Asymptotics of eigenvalues for $g=0$}\label{s:g=0}
 In this section we consider the boundary value problems \eqref{eq:2inteq0}, \eqref{case1}   with $g=0$.  
We count all the eigenvalues with their proper multiplicities and develop a formula for
the asymptotic distribution of the eigenvalues for $g=0$, which is used to obtain the corresponding
formula for general $g$. We take the canonical fundamental system $y_j$, $j=1,\dots,4$, of \eqref{eq:2inteq0}  with $y_j^{(m)}(0)=\delta_{j,m+1}$ for $m=0,\dots,3$, which is analytic on $\mathbb{C}$ with respect
to $\lambda$. Putting
  \[M(\lambda )=(B_i(\lambda )y_j(\cdot ,\lambda ))_{i,j=1}^4,\]
the eigenvalues of the boundary value problems \eqref{eq:2inteq0}, \eqref{case1}   for $g=0$,       are the eigenvalues of the analytic matrix function $M$, where the corresponding geometric and algebraic multiplicities coincide, see \cite[Theorem 3.1.2]{men-mol}. Setting $\lambda =\mu^2$ and
  \[y(x,\mu)= \frac1{2\mu^3}\sinh(\mu x)-\frac1{2\mu^3}\sin(\mu x),\]
it is easy to see that
\begin{equation}\label{jDerivative}
  y_j(x,\lambda )=y^{(4-j)}(x,\mu),\quad j=1,\dots,4.
\end{equation}

Since the first and the second rows of $M(\lambda)$ have exactly one entry 1 and all other entries 0, it follows that for each of the 2  different classes of boundary conditions $\det~M(\lambda)=\pm \phi(\mu)$, where 
\begin{align*}\phi(\mu)=\det \begin{pmatrix}B_3(\mu^2)y_{\sigma(1)}(\cdot,\mu)&B_3(\mu^2)y_{\sigma(2)}(\cdot,\mu)\\B_4(\mu^2)y_{\sigma(1)}(\cdot,\mu)&B_4(\mu^2)y_{\sigma(2)}(\cdot,\mu)\end{pmatrix}.\end{align*} 
 \begin{gather}\label{Dcases}(\sigma(1),\sigma(2))=\begin{cases}(3,4) \textrm{ in  Case 1}, ~  (2,4) \textrm{ in Case 2},   (2, 3)~ \textrm{ in  Case 3},  \\(1,4) \textrm{ in  Case 4,} ~ (1,3)~\textrm{ in  Case 5, }    (1,2) \textrm{ in Case 6}.\end{cases}\end{gather}
Therefore
\begin{align}\label{phi}\phi(\mu)&=B_3(\mu^2)y_{\sigma(1)}(\cdot,\mu)B_4(\mu^2)y_{\sigma(2)}(\cdot,\mu)-B_4(\mu^2)y_{\sigma(1)}(\cdot,\mu)B_3(\mu^2)y_{\sigma(2)}(\cdot,\mu)\nonumber\\
&=\left(y_{\sigma(1)}^{(p_3)}(a)+i\beta_3\mu^2 y_{\sigma(1)}^{(q_3)}(a)\right)\left(y_{\sigma(2)}^{(p_4)}(a)+i\beta_4\mu^2 y_{\sigma(2)}^{(q_4)}(a)\right)\nonumber\\&\quad-\left(y_{\sigma(1)}^{(p_4)}(a)+i\beta_4\mu^2 y_{\sigma(1)}^{(q_4)}(a)\right)\left(y_{\sigma(2)}^{(p_3)}(a)+i\beta_3\mu^2 y_{\sigma(2)}^{(q_3)}(a)\right)\nonumber\\&=y_{\sigma(1)}^{(p_3)}(a)y_{\sigma(2)}^{(p_4)}(a)-y_{\sigma(1)}^{(p_4)}(a)y_{\sigma(2)}^{(p_3)}(a)+i\mu^2\left[\beta_3\Big(y_{\sigma(1)}^{(q_3)}(a)y_{\sigma(2)}^{(p_4)}(a)\right.\nonumber\\&\quad\left.-y_{\sigma(1)}^{(p_4)}(a)y_{\sigma(2)}^{(q_3)}(a)\Big)+\beta_4\Big(y_{\sigma(1)}^{(p_3)}(a)y_{\sigma(2)}^{(q_4)}(a)-y_{\sigma(1)}^{(q_4)}(a)y_{\sigma(2)}^{(p_3)}(a)\Big)\right]\nonumber\\&\quad+\beta_3\beta_4\mu^4\Big[y_{\sigma(1)}^{(q_4)}(a)y_{\sigma(2)}^{(q_3)}(a)-y_{\sigma(1)}^{(q_3)}(a)y_{\sigma(2)}^{(q_4)}(a)\Big].\end{align}
Next we discuss the asymptotics of the zeros of the problems for each  class Case$^{(a)}$ $j$, $j=1,2$. 
 \subsection{Asymptotics of eigenvalues for $g=0$ of the  problems of Class Case$^{(a)}$ 1}\label{Class1:g=0}
It follows from \eqref{allclasses} and \eqref{phi} that the characteristic functions $\phi(\mu)$ of the eigenvalue problems of Case$^{(a)}$ 1 are given by:
\begin{align}\label{Class1}\phi(\mu)&=y'_{\sigma(1)}(a)y_{\sigma(2)}^{(3)}(a)-y_{\sigma(1)}^{(3)}(a)y'_{\sigma(2)}(a)+i\mu^2\left[\beta_3\Big(y_{\sigma(1)}(a)y_{\sigma(2)}^{(3)}(a)\right.\nonumber\\&\quad\left.-y_{\sigma(1)}^{(3)}(a)y_{\sigma(2)}(a)\Big)+\beta_4\Big(y'_{\sigma(1)}(a)y''_{\sigma(2)}(a)-y''_{\sigma(1)}(a)y'_{\sigma(2)}(a)\Big)\right]\nonumber\\&\quad+\beta_3\beta_4\mu^4\Big[y''_{\sigma(1)}(a)y_{\sigma(2)}(a)-y_{\sigma(1)}(a)y''_{\sigma(2)}(a)\Big].\end{align}
Each of the summands in $\phi $ is a product of a power in $\mu$ and a product of two sums of a trigonometric and a hyperbolic functions. The highest $\mu$-power occurs with
\begin{align*}\beta_3\beta_4\mu^4\Big[y''_{\sigma(1)}(a)y_{\sigma(2)}(a)-y_{\sigma(1)}(a)y''_{\sigma(2)}(a)\Big].\end{align*}
Hence we are going to investigate the zeros of 
\begin{align*}\phi_0(\mu)=2\mu^4\Big[y''_{\sigma(1)}(a)y_{\sigma(2)}(a)-y_{\sigma(1)}(a)y''_{\sigma(2)}(a)\Big].\end{align*}
It follows from \eqref{jDerivative} and \eqref{Dcases} that  for the above six cases we obtain: \\
 {\rm Case 1:} $ p_1=0, ~  p_2=1$:
\begin{align*}\phi_0(\mu)=\mu (\cos(\mu a)\sinh(\mu a)-\sin(\mu a)\cosh(\mu a)).\end{align*}
 {\rm Case 2:} $p_1=0,~  p_2=2$:
\begin{align*}\phi_0(\mu)=-\mu^2\sin(\mu a)\sinh(\mu a).\end{align*}
{\rm Case 3:} $ p_1=0,~   p_2=3$: 
\begin{align*} \phi_0(\mu)&=-\mu^3(\sin(\mu a)\cosh(\mu a)+\cos(\mu a)\sinh(\mu a)).\end{align*}
{\rm Case 4:} $p_1=1,~  p_2=2$:
\begin{align*} \phi_0(\mu)&=-\mu^3(\sin(\mu a)\cosh(\mu a)+\cos(\mu a)\sinh(\mu a)).\end{align*}
{\rm Case 5:} $ p_1=1, ~  p_2=3$:
\begin{align*}\phi_0(\mu)&=-2\mu^4\cos(\mu a)\cosh(\mu a).\end{align*}
{\rm Case 6:} $p_1=2, ~ p_2=3$:
\begin{align*} \phi_0(\mu)&=\mu^5 (\sin(\mu a)\cosh(\mu a)-\cos(\mu a)\sinh(\mu a)).\end{align*}

Next  we  give the asymptotic distributions of the zeros of $\phi_0(\mu)$, with proper counting.
 \begin{lem}\label{Class1phizero}
{\rm Case 1:} $ p_1=0, ~  p_2=1$,  $\phi _0$ has a zero of multiplicity  $4$ at $0$, exactly one simple zero $\tilde\mu_k$ in each interval $\left(\left(k-\frac12\right)\frac\pi a,\left(k+\frac12\right)\frac\pi a\right)$ for positive integers $k$ with asymptotics
\begin{equation*}\tilde\mu_k=(4k-3)\frac\pi{4a}+o(1),~k=2,3\dots,\end{equation*}
simple zeros at $\tilde\mu_k$, $-\tilde\mu_k$, $\tilde\mu_{-k}=i\tilde\mu_k$ and $-i\tilde\mu_k$ for $k=2,3, \dots,$ and no other zeros.\\
{\rm Case 2:} $p_1=0,~  p_2=2$, $\phi _0$ has a zero of multiplicity  $4$ at $0$,    simple zeros  at 
\begin{equation*}\tilde{\mu}_{k}= (k-1)\frac{\pi}{a}, ~k=2,3,\dots,
\end{equation*}
simple zeros at $-\tilde\mu_k$, $\tilde\mu_{-k}= i\tilde \mu_k$ and $-i\tilde \mu_k$ for $k=2,3,\dots$, and no other zeros.\\
{\rm Case 3:} $ p_1=0,~   p_2=3$,  $\phi _0$ has a zero of multiplicity  $4$ at $0$, exactly one simple zero 
 $\tilde \mu_k$  in each interval
$\left(\left(k-\frac{1}{2}\right)\frac\pi a,\left(k+\frac{1}{2}\right)\frac \pi a \right)$
for positive integers $k$ with asymptotics
\begin{equation*}\tilde{\mu}_{k}= (4k-5)\frac{\pi}{4a}+o(1), ~k=2,3,\dots,
\end{equation*}
simple zeros at $-\tilde \mu_k$, $\tilde\mu_{-k}= i\tilde \mu_k$ and $-i\tilde \mu_k$ for $k=2,3,\dots$, and no other zeros.\\
{\rm Case 4:} $p_1=1,~  p_2=2$, $\phi _0$ has a zero of multiplicity  $4$ at $0$, exactly one simple zero $\tilde \mu_k$  in each interval
$\left(\left(k-\frac{1}{2}\right)\frac\pi a,\left(k+\frac{1}{2}\right)\frac \pi a \right)$
for positive integers $k$ with asymptotics
\begin{equation*}\tilde{\mu}_{k}= (4k-5)\frac{\pi}{4a}+o(1), ~k=2,3,\dots,
\end{equation*}
simple zeros at $-\tilde\mu_k$, $\tilde\mu_{-k}= i\tilde \mu_k$ and $-i\tilde \mu_k$ for $k=2,3,\dots$, and no other zeros.\\
{\rm Case 5:} $ p_1=1, ~  p_2=3$, $\phi _0$ has a zero of multiplicity $4$ at $0$,  simple zeros at
\begin{equation*}\tilde\mu_k=(2k-1)\frac{\pi}{2a}, ~ k= 2,3,\dots,\end{equation*} 
simple zeros at $-\tilde \mu_k$, $\tilde \mu_{-k}=i\tilde\mu_k$ and $-i\tilde \mu_k$, $k=2,3, \dots$, and no other zeros.\\
{\rm Case 6:} $p_1=2, ~ p_2=3$, $\phi _0$ has a zero of multiplicity $8$ at $0$, exactly one simple zero $\tilde\mu_k$ in each interval $\left(\left(k-\frac12\right)\frac\pi a,\left(k+\frac12\right)\frac\pi a\right)$ for positive integers $k$ with asymptotics
\begin{equation*}\tilde\mu_k=(4k-7)\frac\pi{4a}+o(1),~k=3,4,\dots,\end{equation*}
simple zeros at $\tilde\mu_k$, $-\tilde\mu_k$, $\tilde\mu_{-k}=i\tilde\mu_k$ and $-i\tilde\mu_k$ for $k=3,4,\dots,$ and no other zeros.
 \end{lem}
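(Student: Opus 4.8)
The plan is to analyze each of the six explicit formulas for $\phi_0(\mu)$ derived just above the lemma, treating them one at a time; the six cases fall into three types and the work is essentially the same within each type. First I would factor out the pure power of $\mu$ (accounting for the zero at the origin) and reduce each $\phi_0$ to a product or combination of $\sin$, $\cos$, $\sinh$, $\cosh$. Cases 2 and 5 are the easy ones: $\sin(\mu a)\sinh(\mu a)$ vanishes exactly when $\mu a\in\pi\Z$ (on the real axis) and, since $\sinh$ has no real zeros other than $0$, the only real zeros come from $\sin(\mu a)$; likewise $\cos(\mu a)\cosh(\mu a)$ vanishes exactly at the zeros of $\cos(\mu a)$. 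Here the zeros are \emph{exact}, not merely asymptotic, so one only needs to count multiplicities at $0$ and list the zeros $(k-1)\pi/a$, resp.\ $(2k-1)\pi/(2a)$, then invoke the symmetry argument below. The multiplicity at $0$ is read off as the order of vanishing: e.g.\ in Case 2, $\mu^2\sin(\mu a)\sinh(\mu a)\sim \mu^2\cdot\mu a\cdot\mu a$, giving multiplicity $4$; in Case 6 the extra factor $\mu^5$ together with the order-$3$ vanishing of the trigonometric-hyperbolic combination (see below) gives $8$.

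For Cases 1, 3, 4, 6 the combination is of the form $\cos(\mu a)\sinh(\mu a)\mp\sin(\mu a)\cosh(\mu a)$, whose real zeros are not elementary. The standard device is to write such an expression, for $\mu$ real and large, as $\tfrac12 e^{\mu a}\bigl(\cos(\mu a)\mp\sin(\mu a)\bigr)+O(e^{-\mu a})$ and note $\cos\theta-\sin\theta=\sqrt2\cos(\theta+\pi/4)$, $\cos\theta+\sin\theta=\sqrt2\sin(\theta+\pi/4)=\sqrt2\cos(\theta-\pi/4)$. Thus the zeros are asymptotically those of a single cosine with a phase shift, which lands them at the claimed $(4k-c)\pi/(4a)+o(1)$ with the appropriate constant $c\in\{3,5,7\}$ depending on the sign and on the power of $\mu$ factored out (which shifts the indexing). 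To turn ``asymptotically'' into ``exactly one simple zero in each interval $\bigl((k-\tfrac12)\tfrac\pi a,(k+\tfrac12)\tfrac\pi a\bigr)$'' I would use a Rouché-type or intermediate-value argument: on the real axis the dominant term $\tfrac12 e^{\mu a}\sqrt2\cos(\mu a\mp\tfrac\pi4)$ changes sign across consecutive half-period points while the remainder $O(e^{-\mu a})$ is too small to prevent the sign change for $k$ beyond some threshold, giving existence; and a derivative estimate (or again Rouché on a small disc) gives uniqueness and simplicity. The order of vanishing at $0$ is obtained by Taylor expansion: $\cos(\mu a)\sinh(\mu a)-\sin(\mu a)\cosh(\mu a)$ vanishes to order $3$ at $\mu=0$ (the quadratic terms cancel), and multiplying by the prefactor $\mu$, resp.\ $\mu^3$, $\mu^5$, yields multiplicity $4$, resp.\ $6$ combined with cancellations — in Case 6 one checks the combination $\sin(\mu a)\cosh(\mu a)-\cos(\mu a)\sinh(\mu a)$ also vanishes to order $3$, so with $\mu^5$ the total is $8$.

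The remaining structural point, common to all six cases, is the $\{1,i,-1,-i\}$ symmetry: since $\phi_0$ is (up to the power of $\mu$) built from $y(x,\mu)=\tfrac1{2\mu^3}(\sinh\mu x-\sin\mu x)$ and its derivatives, and $\sin(i z)=i\sinh z$, $\sinh(iz)=i\sin z$, replacing $\mu$ by $i\mu$ permutes $\sin\leftrightarrow\sinh$ up to powers of $i$; tracking these factors shows $\phi_0(i\mu)=c\,\phi_0(\mu)$ for a nonzero constant $c$ (possibly after also using $\phi_0(-\mu)=\pm\phi_0(\mu)$ from parity of the trigonometric/hyperbolic combination). Hence the full zero set is the union of the four rotated copies of the real positive zeros together with the zero at the origin, and a counting check confirms no zeros are missed — this is what ``and no other zeros'' records. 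The main obstacle is the passage from the asymptotic location of the real zeros to the exact statement that there is precisely one simple zero in each half-period interval for all sufficiently large $k$ (and the verification, by direct inspection of the first few intervals, that the count is right from the stated starting index $k=2$, resp.\ $k=3$, onward); everything else is bookkeeping of powers of $\mu$ and elementary Taylor expansions at the origin.
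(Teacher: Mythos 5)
Your treatment of the real-axis zeros and of the multiplicities at the origin is essentially sound, and your observation that $\phi_0(i\mu)=c\,\phi_0(\mu)$ propagates the positive real zeros to the other three semiaxes matches the paper. (Two small remarks: the paper gets ``exactly one simple zero in each interval $\left(\left(k-\frac12\right)\frac\pi a,\left(k+\frac12\right)\frac\pi a\right)$'' for \emph{all} positive $k$ at once, by dividing $\phi_0$ by $\cos(\mu a)\cosh(\mu a)$ and using that $\mu\mapsto\tan(\mu a)\pm\tanh(\mu a)$ is strictly monotone on each such interval, rather than an $e^{\mu a}$-dominance argument valid only for large $k$; and your bookkeeping of the order of vanishing at $0$ is garbled for Case 3, where the relevant combination is $\sin(\mu a)\cosh(\mu a)+\cos(\mu a)\sinh(\mu a)$, which vanishes to order $1$, not $3$, so that $\mu^3$ times it gives multiplicity $4$.)

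The genuine gap is the claim ``no other zeros.'' You assert that ``the full zero set is the union of the four rotated copies of the real positive zeros together with the zero at the origin, and a counting check confirms no zeros are missed,'' but the rotational symmetry $\phi_0(i\mu)=c\,\phi_0(\mu)$ only says the zero set is invariant under multiplication by $i$; it in no way forces the zeros onto the axes, and you never supply the counting check. This is the one nontrivial part of the lemma, and the paper proves it by an explicit argument: using product-to-sum formulas one writes, e.g.\ in Case 3, $\phi_0(\mu)=-\frac12\mu^3\left[(1-i)\sin((1+i)\mu a)+(1+i)\sin((1-i)\mu a)\right]$, so any nonzero zero forces $|\sin((1+i)\mu a)|=|\sin((1-i)\mu a)|$; writing $(1+i)\mu a=x+iy$ this becomes $\cosh^2|y|+\cos^2|y|=\cosh^2|x|+\cos^2|x|$, and since $t\mapsto\cosh^2 t+\cos^2 t$ is strictly increasing on $(0,\infty)$ one gets $|x|=|y|$, hence $\mu$ real or pure imaginary. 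If you prefer your counting route, you must actually carry it out: estimate $\phi_0$ on the boundaries of the large squares $S_k$ against its dominant exponential part, apply the argument principle or Rouch\'e to count all zeros of $\phi_0$ inside $S_k$, and check that this total equals the number of axis zeros already located plus the multiplicity at the origin. Without one of these two arguments the ``no other zeros'' clause — and hence the later assertion that the eigenvalues split into real and pure imaginary ones — is unproved.
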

\begin{proof}The result is obvious in cases 2 and 5. Cases 3 and 4 are identical, while cases  1 and 6 differ in the factor with the power of $\mu$. We will consider Case 3. The choice of the indexing for the non-zeros of $\phi_0$ will become apparent later.

It is easy to see that $\phi_0$ has a zero of multiplicity $4$ at $0$. Next we are going to find the zeros of $\phi_0$ on the positive real axis. One can observe that for $\mu\ne0$, $\phi_0(\mu)=0$ implies $\cosh(\mu a)\ne0$ and $\cos(\mu a)\ne0$, whence the positive zeros of $\phi_0$ are those $\mu>0$ for which $\tan(\mu a)+\tanh(\mu a)=0$. Since $\tan'(x)\ge 1$ and $\tanh'(x)>0$ for all $x\in \mathbb{R}$, the function $\mu \mapsto \tan(\mu a)+\tanh(\mu a)$  is increasing with positive derivative on each interval $\left(\left(k-\frac12\right)\frac{\pi}a, \left(k+\frac12\right)\frac{\pi}a\right)$, $k\in\mathbb{Z}$. On each of these intervals, the function moves from $-\infty$ to $\infty$, thus we have exactly one simple zero $\tilde{\mu}_k$ of  $\tan(\mu a)+\tanh(\mu a)$ in each interval $\left(\left(k-\frac12\right)\frac{\pi}a, \left(k+\frac12\right)\frac{\pi}a\right)$, where $k$ is a positive integer, and no zero in $\left(0,\frac\pi a\right)$. Since $\tanh(\mu a)\to 1$ as $\mu \to\infty$, we have 
\begin{equation*}\tilde\mu_k=(4k-5)\frac\pi{4a}+o(1), ~k=2,3,\dots.\end{equation*} 
The location of the zeros on the other three half-axes follows from repeated application of $\phi_0(i\mu)=-\phi_0(\mu)$.

To complete the proof, we will show that all zeros of $\phi_0$  lie on the real or the imaginary  axis. To this end we observe that the product-to-sum formula for trigonometric functions gives
\begin{align}\label{Case 3}\phi_0(\mu)&=-\mu^3[\sin(\mu a)\cosh(\mu a)+\cos(\mu a)\sinh(\mu a)]\nonumber\allowdisplaybreaks\\&=-\dfrac12\mu^3[\sin((1+i)\mu a)+\sin((1-i)\mu a)-i\sin((1+i)\mu a)\nonumber\allowdisplaybreaks\\&\qquad\qquad+i\sin((1-i)\mu a)]\nonumber\allowdisplaybreaks\\&=-\dfrac12\mu^3[(1-i)\sin((1+i)\mu a)+(1+i)\sin((1-i)\mu a)].\end{align}
Putting  $(1+i)\mu a=x+iy$, $x,~y\in\mathbb{R}$, it follows for $\mu\ne0$ that
\begin{align}\label{prooCase 3}\phi_0(\mu) =0&\Rightarrow |\sin((1+i)\mu a)|=|\sin((1-i)\mu a)|\\\nonumber\allowdisplaybreaks&\Leftrightarrow|\sin(x+iy)|=|\sin(y-ix)|\\\nonumber\allowdisplaybreaks&\Leftrightarrow\cosh^2y-\cos^2x=\cosh^2x-\cos^2y\\\nonumber\allowdisplaybreaks&\Leftrightarrow\cosh^2|y|+\cos^2|y|=\cosh^2|x|+\cos^2|x|.\end{align}
Since $\cosh^2x+\cos^2x=\frac12\cosh (2x)+\frac12\cos(2x)+1$ has a positive derivative on $(0,\infty)$, this function is strictly increasing, and $\phi_0(\mu)=0$ therefore implies by \eqref{prooCase 3} that $|y|=|x|$ and thus $y=\pm x$. Then
\begin{equation*}\mu=\dfrac{x+iy}{(1+i)a}=\dfrac{1\pm i}{1+i}\,\dfrac xa\end{equation*} is either real or pure imaginary.

For Case 1, a power series expansion shows that $\phi_0$ has a zero of multiplicity $4$ at $0$. For the zeros on the positive real axis we just need to replace the function $\mu\mapsto \tan(\mu a)+\tanh(\mu a)$ in the proof of Case 3 by $\mu\mapsto \tan(\mu a)-\tanh(\mu a)$ and observe that $\tanh'(\mu a)<1$. Furthermore, in this case we have a representation of $\phi_0$ similar  to \eqref{Case 3}, except that on the right hand side, the factor $1-i$ in front   of the sine functions are interchanged. Hence \eqref{prooCase 3}  holds in Case 1, and all the zeros must be real or pure imaginary.

Case 6 easily follows from the result for Case 1.\end{proof}
\begin{prop}\label{Class1phizeroProp}For $g=0$, there exists a positive integer $k_0$ such the eigenvalues $\hat\lambda_k$, $k\in\mathbb Z$  of the problems \eqref{eq:2inteq0}, \eqref{case1}, where $(p_3,q_3)=(1,0)$ and $  (p_4,q_4)=(3,2)$,  are  $\hat\lambda_{-k}=-\overline{\hat{\lambda}_k}$,     $\hat\lambda_k=\hat\mu^2_k$ for $k\ge k_0$  and  the $\hat\mu_k$ have the following asymptotic representations as $k\to\infty$:\\\\
{\rm Case 1:} $ p_1=0, ~  p_2=1$,    \quad $ \hat\mu_k=(4k-3)\dfrac\pi{4a}+o(1).$\\\\
{\rm Case 2:} $p_1=0,~  p_2=2$,    \quad $\hat\mu_k=(k-1)\dfrac\pi a+o(1)$.\\\\
{\rm Case 3:} $ p_1=0,~   p_2=3$,  \quad$\hat\mu_k=(4k-5)\dfrac\pi{4a}+o(1)$.\\\\
{\rm Case 4:} $p_1=1,~  p_2=2$,  \quad $\hat\mu_k=(4k-5)\dfrac\pi{4a}+o(1)$.\\\\
{\rm Case 5:} $ p_1=1, ~  p_2=3$,  \quad $\hat\mu_k=(2k-1)\dfrac\pi{2a}+o(1)$.\\\\
{\rm Case 6:} $p_1=2, ~ p_2=3$,  \quad $\hat\mu_k=(4k-7)\dfrac\pi{4a}+o(1)$. \\\\
 In particular, there is an even  number of the pure imaginary eigenvalues  in each case.
\end{prop}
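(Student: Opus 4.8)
The plan is to obtain the eigenvalue asymptotics of Proposition \ref{Class1phizeroProp} as a perturbation of the zero asymptotics of $\phi_0$ established in Lemma \ref{Class1phizero}, exploiting that these problems are Birkhoff regular (as recorded just before Section \ref{s:g=0}). First I would recall that the eigenvalues of \eqref{eq:2inteq0}, \eqref{case1} with $g=0$ are precisely the zeros of $\det M(\lambda)=\pm\phi(\mu)$, and that $\phi(\mu)$ was computed in \eqref{Class1} with leading term $\phi_0(\mu)=2\mu^4\bigl[y''_{\sigma(1)}(a)y_{\sigma(2)}(a)-y_{\sigma(1)}(a)y''_{\sigma(2)}(a)\bigr]$. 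The remaining summands in \eqref{Class1} carry strictly lower powers of $\mu$ (namely $\mu^2$ and $\mu^0$ times products of trigonometric and hyperbolic functions of $\mu a$), so one writes $\phi(\mu)=\phi_0(\mu)+\phi_1(\mu)$ with $\phi_1(\mu)=O(\mu^2 e^{2|\operatorname{Re}(\mu a)|})$ as $|\mu|\to\infty$, while $\phi_0$ itself has order $\mu^4 e^{2|\operatorname{Re}(\mu a)|}$ along any ray off the axes and, on small circles around each $\tilde\mu_k$ from Lemma \ref{Class1phizero}, is bounded below by a constant times $k^3 e^{k}$-type growth.

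The core of the argument is a Rouché comparison. On a standard system of boundary contours — large circles $|\mu|=r_k$ with $r_k$ chosen between consecutive $\tilde\mu_k$'s, unioned with thin sectors around the four half-axes so that each contour encircles exactly the zeros $\pm\tilde\mu_k,\pm i\tilde\mu_k$ with the correct total multiplicity — one shows $|\phi_1(\mu)|<|\phi_0(\mu)|$ for all large $k$. This requires the standard lower bound for sums of $\sin$ and $\cos$ of $\mu a$ away from their zero sets, which holds uniformly on the chosen contours precisely because Lemma \ref{Class1phizero} located the zeros of $\phi_0$ and showed (via the $|y|=|x|$ argument) that they all lie on the real or imaginary axis, so no zeros of $\phi_0$ are lost in the thin sectors. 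Birkhoff regularity is what guarantees that $\phi$ has no "extra" zeros far from the $\tilde\mu_k$: the counting function for zeros of $\phi$ in $|\mu|\le r_k$ matches that of $\phi_0$, so there is a $k_0$ such that for $k\ge k_0$ each eigenvalue $\hat\mu_k$ of $\phi$ lies within $o(1)$ of $\tilde\mu_k$. Substituting the six explicit asymptotic forms of $\tilde\mu_k$ from Lemma \ref{Class1phizero} gives the six displayed formulas for $\hat\mu_k$, and setting $\hat\lambda_k=\hat\mu_k^2$ yields the eigenvalue asymptotics; the relation $\hat\lambda_{-k}=-\overline{\hat\lambda_k}$ follows from the symmetry $\phi_0(i\mu)=\pm\phi_0(\mu)$ together with the fact that $\overline{\phi(\bar\mu)}$ and $\phi(\mu)$ have the same zero set (the coefficients in \eqref{phi} are real up to the explicit factors of $i$, so complex conjugation sends eigenvalues to eigenvalues), which pairs $\hat\mu_k$ on the positive real axis with $i\hat\mu_k$ on the positive imaginary axis.

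The final assertion — that there is an \emph{even} number of pure imaginary eigenvalues in each case — I would derive from the symmetry structure rather than from the asymptotics. For large $k$ the eigenvalues come in the quadruples $\pm\hat\mu_k,\pm i\hat\mu_k$, so the pure imaginary ones among these come in pairs $\{i\hat\mu_k,-i\hat\mu_k\}$ and contribute an even count; the only possible imbalance is among the finitely many small eigenvalues. Here one uses that $\phi_0$ has a zero at $0$ of multiplicity $4$ in Cases 1–5 (and $8$ in Case 6), which is an even number, and that the perturbed $\phi$ inherits a zero of the same even multiplicity at $0$ by the same Rouché estimate on a small circle about $0$; combined with the conjugation symmetry $\hat\lambda\mapsto-\overline{\hat\lambda}$, which fixes the pure imaginary axis setwise and fixes $0$, the total number of eigenvalues on the imaginary axis (counted with multiplicity, including a suitable convention at $0$) is even. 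I expect the main obstacle to be the bookkeeping in the Rouché step: choosing the contours $r_k$ and the sector widths uniformly so that (i) the lower bound for $|\phi_0|$ genuinely dominates $|\phi_1|$ near each of the four half-axes, and (ii) the total multiplicity enclosed is exactly the four (or eight) expected values, so that no eigenvalue is miscounted and the indexing $\hat\mu_k\leftrightarrow\tilde\mu_k$ is the correct one. This is essentially the argument already carried out in \cite{MolZin1, MolPiv}, adapted to the present non-self-adjoint $\phi$, so I would lean on those references for the contour construction and focus the written proof on verifying the degree/multiplicity count and the parity statement.
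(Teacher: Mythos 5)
Your main argument for the asymptotic formulas is essentially the paper's: write $\phi$ as $\phi_0$ plus lower-order terms, verify $|\phi-c\,\phi_0|<|\phi_0|$ on small contours around the zeros $\tilde\mu_k$ located in Lemma \ref{Class1phizero} and on a sequence of large contours, and conclude by Rouch\'e that the zeros of $\phi$ are $o(1)$-perturbations of those of $\phi_0$ with no extra large zeros. (Two small quibbles: the paper uses squares $R_{k,\varepsilon}$ and $S_k$ rather than circles with sector excisions, and for $g=0$ the absence of extra zeros comes from the explicit large-contour estimate, not from Birkhoff regularity, which the paper only invokes in Section \ref{gen:g} to pass to general $g$. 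Neither affects correctness.)

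The parity claim, however, is where your argument genuinely breaks. First, the assertion that ``the perturbed $\phi$ inherits a zero of the same even multiplicity at $0$'' is false: in Case 2 one reads off from \eqref{phiCase2} that $\phi(0)=\cos(0)\cosh(0)=1\neq 0$ even though $\phi_0$ vanishes to order $4$ at $0$, and in Case 3 the term $\tfrac{i\mu^2\beta_4}{2}(1+\cos(\mu a)\cosh(\mu a))$ in \eqref{phiCase3} forces the multiplicity at $0$ to drop to $2$. A Rouch\'e estimate on a small circle about $0$ could at best preserve the \emph{number} of zeros inside the circle, not pin them at the origin, and in fact the required inequality fails there. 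Second, you conflate pure imaginary zeros of $\phi$ in the $\mu$-plane with pure imaginary eigenvalues $\lambda=\mu^2$: a pair $\pm i\nu$ of imaginary zeros of $\phi$ yields a single \emph{negative real} eigenvalue $-\nu^2$, whereas $\lambda\in i\mathbb R\setminus\{0\}$ corresponds to $\mu$ on the diagonals $\arg\mu=\pm\pi/4$ $(\mathrm{mod}\ \pi/2)$; also the large zeros of $\phi$ do not form exact quadruples $\pm\hat\mu_k,\pm i\hat\mu_k$, since the perturbation destroys the $i$-rotation symmetry of $\phi_0$. The paper's parity argument is different and does work: the large-square Rouch\'e count shows the total number of zeros of $\phi$ inside $S_k$ equals that of $\phi_0$ (e.g.\ $4k+4$ in Case 3), hence the total number of eigenvalues is half of that and even; the eigenvalues with $\operatorname{Re}\hat\lambda\neq 0$ pair off under $\hat\lambda\mapsto-\overline{\hat\lambda}$, so the remaining, pure imaginary ones must be even in number. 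You already have the total-count ingredient in your large-contour step; you need to use it globally rather than trying to localize the parity at the origin. Finally, your justification of the symmetry $\hat\lambda_{-k}=-\overline{\hat\lambda_k}$ via ``$\overline{\phi(\bar\mu)}$ and $\phi(\mu)$ have the same zero set'' does not do what you need: conjugation $\mu\mapsto\bar\mu$ preserves each half-axis, while the map $\lambda\mapsto-\bar\lambda$ corresponds to $\mu\mapsto i\bar\mu$, so the relevant symmetry is the composition of conjugation with the quarter-turn, and it requires an argument about $\phi$ itself (the paper asserts the pairing without detailed proof, but your stated mechanism is not the right one).
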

\begin{proof}In each  case, we will show that the zeros of $\phi$ are asymptotically close to the zeros of $\phi_0$. We will start with Case 3.\\
{\rm Case 3:} A straightforward calculation gives
\begin{align}\label{phiCase3}\phi(\mu)&=-\dfrac{\beta_3\beta_4\mu^3}2(\sin(\mu a)\cosh(\mu a)+\cos(\mu a)\sinh(\mu a))\nonumber\allowdisplaybreaks\\
&\quad-\dfrac{i\mu^2\beta_3}2(1-\cos(\mu a)\cosh(\mu a))+\dfrac{i\mu^2\beta_4}2(1+\cos(\mu a)\cosh(\mu a))\nonumber\allowdisplaybreaks\\&\quad+\dfrac{\mu}2(\cos(\mu a)\sinh(\mu a)-\sin(\mu a)\cosh(\mu a)).\end{align}
 Let \begin{align}\label{phi1}\phi_1(\mu)=\dfrac{2\phi(\mu)+\beta_3\beta_4\phi_0(\mu)}{\phi_0(\mu)}.\end{align}
 The first term,  up to the constant $-\frac12\beta_3\beta_4$,   is $\phi_0(\mu)$.  It follows that for $\mu$ with $\phi_0(\mu)\ne0$, $\sin(\mu a)\ne 0$, $\sinh(\mu a)\ne0$, we have 
\begin{align}\label{Case3proofPhi1}\phi_1(\mu)=\dfrac{2\phi(\mu)+\beta_3\beta_4\phi_0(\mu)}{\phi_0(\mu)}&=\dfrac1\mu\,\dfrac{i(\beta_4-\beta_3)}{\cos(\mu a)\cosh(\mu a)}\,\dfrac1{\tan(\mu a)+\tanh(\mu a)}\nonumber\\&-\dfrac1\mu\,\dfrac{i(\beta_4+\beta_3)}{\tan(\mu a)+\tanh(\mu a)}\nonumber\\&+\dfrac1{\mu^2}\,\left[1-\dfrac{2\cos(\mu a)\tanh(\mu a)}{\sin(\mu a)+\cos(\mu a)\tanh(\mu a)}\right].\end{align}

Fix $\varepsilon\in (0,\frac{\pi}{4a})$ and for $k=2,3,\dots$, 
let $R_{k,\varepsilon } $ be the boundaries of the squares determined by the vertices 
$(4k-5)\frac{\pi}{4a}\pm \varepsilon \pm i\varepsilon$. These squares do not intersect due to $\varepsilon<\frac{\pi}{2a}$. 
Since $\tan z=-1$ if and only if $z=j\pi-\frac{\pi}4$ and $j\in\mathbb Z$, it follows from the periodicity of $\tan$ that the number
\begin{align*}C_1(\varepsilon)=2\min\{|\tan(\mu a)+1|:\mu\in R_{k,\varepsilon}\}\end{align*} 
is positive and independent of $\varepsilon$. Since $\tanh(\mu a)\to 1$ uniformly in the strip 
$\{\mu\in\mathbb C: \textrm{ Re }\mu\ge 1, |\textrm{Im }\mu|\le\frac{\pi}{4a}\}$ as $|\mu|\to\infty$, there is and integer $k_1(\varepsilon)$ such that
\begin{align*} |\tan(\mu a)+\tanh(\mu a)|\ge C_1(\varepsilon)~\textrm{ for all } \mu\in R_{k,\varepsilon}  \textrm{ with } k>k_1(\varepsilon).\end{align*}
By periodicity, there is a number $ C_2(\varepsilon)>0$ such that $|\cos(\mu a)|>C_2(\varepsilon)$ for all 
$\mu\in R_{k,\varepsilon}$ and all $k$. Observing $|\cosh(\mu a)|\ge |\sinh(\Re \mu a)|$, it follows that there exists 
$k_2(\varepsilon)\ge k_1(\varepsilon)$ such  that for all $\mu$ on the squares $R_{k,\varepsilon}$ with $k\ge k_2(\varepsilon)$ the estimate $|\phi_1(\mu)|<1 $ holds.
Further we assume from Lemma \ref{Class1phizero} that $\tilde{\mu}_k$ is inside of $R_{k,\varepsilon}$ for $k>k_2(\varepsilon)$ and no other zero of $\phi_0$ has this property. 
By definition of $\phi _1$ in \eqref{phi1} and the estimate $|\phi_1(\mu)|<1$  for all $\mu$ on the square $R_{k, \varepsilon}$,  we have
\begin{align}\label{eq:phiphi1}
|2\phi (\mu )+\beta_3\beta_4\phi_0(mu)|<|\phi_0(\mu)|, 
\end{align}  for all  $ \mu$ on the square $ R_{k,\varepsilon }$.
Hence it follows
by Rouch\'e's theorem that there is exactly one (simple) zero $\hat\mu_k$ of $\phi$ in each $R_{k,\varepsilon }$
for $k\ge k_2(\varepsilon )$. 
In view of $\phi _0(i\mu)=\phi _0(\mu)$ and $\phi _1(i\mu)=-\phi _1(\mu)$ for all $\mu \in \mathbb{C}$, 
the same reasoning applies
 to the corresponding squares along the positive imaginary semiaxis. Observing that $\phi$ is an even function, it follows that 
the same estimate applies to the corresponding squares along the  other remaining two semiaxes. Therefore   
$\phi $ has zeros $\pm\hat\mu_k$, $\pm\hat\mu_{-k}$ for $k
>k_2(\varepsilon )$ with the same asymptotic behaviour as the zeros $\pm \tilde\mu_k$, $\pm i\tilde\mu_k$ of
$\phi_0$ as stated in Lemma \ref{Class1phizero}.

 Next we are going to estimate $\phi_1$ on the squares $S_k$, $k\in \mathbb{N}$, whose vertices are
$\pm k\frac{\pi}{a} \pm ik\frac{\pi}{a}$. For $k\in\mathbb Z$ and $\gamma\in\mathbb R$, 
\begin{equation}\label{tan1}
\tan\left(\left(\frac{k\pi}a+i\gamma\right)a\right)=\tan(i\gamma a)=i\tanh(\gamma a)\in i\mathbb R.
\end{equation}
Therefore, we have for $\mu=\frac{k\pi}a+i\gamma$, where $k\in\mathbb Z$ and $\gamma\in\mathbb R$, that
\begin{equation}\label{tan2}
|\tan(\mu  a)|<1 \textrm{ and } |\tan(\mu a)\pm1|\ge 1.
\end{equation}

For $\mu=x+iy$, $x,y\in\mathbb{R}$ and $x\ne 0$, we have
\begin{equation}\label{tanh1}
\tanh(\mu a)=\frac{e^{(ax+iay)}-e^{-(ax+iay)}}{e^{(ax+iay)}+e^{-(ax+iay)}}\to\pm 1
\end{equation}
uniformly in $y$ as $x\to\pm\infty$.
Hence there is $\widetilde{k}_1>0$ such that  for all $k\in\mathbb{Z}$, $|k|\geq \widetilde{k}_1$,  and $\gamma\in\mathbb{R}$,
\begin{equation}\label{asym16eq9}\Bigl|\tanh\Bigl(\Bigl(\frac{k\pi}{a}+
i\gamma\Bigl)a\Bigl)-\sgn(k)\Bigl|<\frac 12.\end{equation}
It follows from \eqref{tan2} and \eqref{asym16eq9} for $\mu=\frac{k\pi}a+i\gamma$, $k\in\mathbb Z$, $|k|\ge\widetilde{k}_1$, and $\gamma\in\mathbb R$ that
\begin{equation}\label{asym16eq9*}\Bigl|\tan(\mu a)+\tanh(\mu a)\Bigl|\ge \frac 12.\end{equation}
Furthermore, we will make use of the estimates
\begin{align}\label{asym16eqcosh}&\Bigl|\cosh\Bigl(\Bigl(\frac{k\pi}{a}+
i\gamma\Bigl)a\Bigl)\Bigl|\ge|\sinh(k\pi)|,\\
\label{asym16eqcos}&\Bigl|\cos\Bigl(\Bigl(\frac{k\pi}{a}+
i\gamma\Bigl)a\Bigl)\Bigl|=\cosh(\gamma a)\ge1,
\end{align}
which hold for all $k\in\mathbb{Z}$   and all $\gamma\in\mathbb{R}$. Therefore it follows from \eqref{tan2}, 
 \eqref{asym16eq9*}--\eqref{asym16eqcos}     and the corresponding estimates with $\mu$ replaced by $i\mu$ that there is 
$\hat k_1\ge\widetilde k_1$ such that $|\phi_1(\mu)|<1 $ for all $\mu$   $\in S_k$ with $k>\hat k_1$, where $\phi _1$ is as defined in \eqref{phi1}.
By definition of $\phi _1$  in \eqref{phi1}  and the estimate $|\phi_1(\mu)|<1$  for all $\mu$   $\in S_{k}$,
 from  Rouch\'e's theorem we conclude that
the functions $\phi_0$ and $\phi$ have the same number of zeros in the square $S_k$, for $k\in\mathbb{N}$ with $k\ge\hat k_1$.

Since $\phi _0$ has $4k+4$ zeros inside $S_k$ and thus $4k+4+4$ zeros inside $S_{k+1}$, it
follows that $\phi $ has no large zeros other than the zeros $\pm\hat\mu_k$ found above for $|k|
$ sufficiently large, and that there are $\hat \mu_k$ for small $|k|$ such that   
$\hat \lambda _k=\hat\mu_k^2$ account for all eigenvalues of the
problem \eqref{eq:2inteq0}, \eqref{case1}, where $p_1=0$, $p_2=3$, $(p_3, q_3)=(1,0)$ and $(p_4, q_4)=(3,2)$. Since each of these eigenvalues gives rise to two zeros of $\phi $, counted with multiplicity. 
  All eigenvalues with nonzero real part occur in pairs 
$\hat \lambda _k$, $-\overline{\hat\lambda _k}$ with $\Re \hat \lambda _k\ge 0$, which shows that we can index all such eigenvalues as   
$\hat \lambda _{-k}=-\overline{\hat\lambda _k}$. Since there is an even number of remaining indices, the number of pure imaginary eigenvalues must be even.\\
Case 4: The value of  $\sigma(1)$ differs from that in Case 3 by -1 while the value of $\sigma(2)$ differs from that in Case 3 by  1, see \eqref{Dcases}. Hence the function $\phi$ in this case is up to a constant factor  identical to that  in Case 3. Hence the results in cases 3 and 4 are similar. \\
Case  1:  The values of $\sigma(1)$ and $\sigma(2)$ differ from those  in Case 3 by 1.  Hence the function $\phi$ in this case is obtained from that in Case 3 by multiplication by $\mu^{-2}$ and by  replacing each    trigonometric and hyperbolic function by its derivative.  Hence the result follows from that in Case 3.\\
Case  6:  The values of $\sigma(1)$ and $\sigma(2)$ differ from those  in Case 1 by -2.  Hence the function $\phi$ in this case is obtained from that in Case 1 by multiplication by $\mu^4$ and by replacing each   trigonometric   function by its negative. \\
Case 2:  A straightforward calculation gives 
\begin{align}\label{phiCase2}\phi(\mu)&=-\beta_3\beta_4\mu^2\sin(\mu a)\sinh(\mu a)\nonumber\\&\quad+\dfrac{i(\beta_3+\beta_4)\mu}2(\sin(\mu a)\cosh(\mu a)+\cos(\mu a)\sinh(\mu a))\nonumber\\&\quad+\cos(\mu a)\cosh(\mu a).\end{align} Then it follows from \eqref{phi1} that
\begin{align}\label{Case2Phi1}\phi_1(\mu)&=\dfrac{2\phi(\mu)+\beta_3\beta_4\phi_0(\mu)}{\phi_0(\mu)}\nonumber\\& =\dfrac1{2\mu}\,\left(\coth(\mu a)+\cot(\mu a)\right)+\dfrac1{2\mu^2}\,\cot(\mu a)\coth(\mu a).\end{align}
The result follows with reasonings and estimates as in the proof of Case 3, replacing $\mu$ by $\mu\pm\frac\pi2$ and $\mu\pm i\frac\pi2$ respectively.\\
Case 5: Since both $\sigma(1)$ and $\sigma(2)$ differ from the values in Case 2 by -1, it follows that the function $\phi$ in this case is obtained from $\phi$ in Case 2 by multiplication by $\mu^2$ and by replacing the trigonometric and hyperbolic functions by their derivatives. The result follows with reasonings and estimates similar to those in Case 3.
\end{proof}



 \subsection{Asymptotics of eigenvalues for $g=0$ of the  problems of Class Case$^{(a)}$  2}\label{Class3:g=0}

It follows from \eqref{allclasses} and \eqref{phi} that the characteristic functions $\phi(\mu)$ of the eigenvalue problems of Case$^{(a)}$  2  are given by:
\begin{align}\label{Class3}\phi(\mu )&=y''_{\sigma(1)}(a)y_{\sigma(2)}^{(3)}(a)-y_{\sigma(1)}^{(3)}(a)y''_{\sigma(2)}(a)+i\mu^2\left[\beta_3\Big(y'_{\sigma(1)}(a)y_{\sigma(2)}^{(3)}(a)\right.\nonumber\\&\quad\left.-y_{\sigma(1)}^{(3)}(a)y'_{\sigma(2)}(a)\Big)+\beta_4\Big(y''_{\sigma(1)}(a)y_{\sigma(2)}(a)-y_{\sigma(1)}(a)y''_{\sigma(2)}(a)\Big)\right]\nonumber\\&\quad+\beta_3\beta_4\mu^4\left[y_{\sigma(1)}(a)y'_{\sigma(2)}(a)-y'_{\sigma(1)}(a)y_{\sigma(2)}(a)\right].\end{align}
 The highest $\mu$-powers of  the characteristic functions of the problems   of Case$^{(a)}$ 2 occur with
\begin{equation}\label{highestClass3} i\beta_3\mu^2\Big[y'_{\sigma(1)}(a)y_{\sigma(2)}^{(3)}(a)-y_{\sigma(1)}^{(3)}(a)y'_{\sigma(2)}(a)\Big].\end{equation}
Hence we are going to investigate the zeros of 
\begin{equation*}\phi_0(\mu)=2\mu^2\Big[y'_{\sigma(1)}(a)y_{\sigma(2)}^{(3)}(a)-y_{\sigma(1)}^{(3)}(a)y'_{\sigma(2)}(a)\Big].\end{equation*}
It follows from \eqref{jDerivative} and \eqref{Dcases} that  for the   six cases of Case$^{(a)}$  2, we obtain: \\
{\rm Case 1:} $ p_1=0, ~  p_2=1$: 
\begin{align*} \phi_0(\mu)&= \mu (\cos(\mu a)\sinh(\mu a)+\sin(\mu a)\cosh(\mu a)).\end{align*}
{\rm Case 2:} $p_1=0,~  p_2=2$:
\begin{align*} \phi_0(\mu)=\mu^2\cos(\mu a)\cosh(\mu a).\end{align*}
{\rm Case 3:} $ p_1=0,~   p_2=3$:
\begin{align*} \phi_0(\mu)&= \mu^3 (\cos(\mu a)\sinh(\mu a)-\sin(\mu a)\cosh(\mu a)).\end{align*}
{\rm Case 4:} $p_1=1,~  p_2=2$: 
\begin{align*} \phi_0(\mu)&= \mu^3 (\cos(\mu a)\sinh(\mu a)-\sin(\mu a)\cosh(\mu a)).\end{align*}
{\rm Case 5:} $ p_1=1, ~  p_2=3$:
\begin{align*} \phi_0(\mu)&=-2\mu^4\sin(\mu a)\sinh(\mu a).\end{align*}
{\rm Case 6:} $p_1=2, ~ p_2=3$:
\begin{align*} \phi_0(\mu)&=- \mu^5 (\sin(\mu a)\cosh(\mu a)+\cos(\mu a)\sinh(\mu a)).\end{align*}

Next we find the asymptotic distribution of the zeros of the functions $\phi_0$ of the problems of Case$^{(a)}$ 2, with proper counting.
\begin{lem}\label{class3phizero} 
{\rm Case 1:} $ p_1=0, ~  p_2=1$,    $\phi _0$ has a zero of multiplicity  $2$ at $0$, exactly one simple zero $\tilde\mu_k$ in each interval $\left(\left(k-\frac12\right)\frac\pi a,\left(k+\frac12\right)\frac\pi a\right)$ for positive integers $k$ with asymptotics
\begin{equation*}\tilde\mu_k=(4k-1)\frac\pi{4a}+o(1),~k=1,2,\dots,\end{equation*}
simple zeros at $\tilde\mu_k$, $-\tilde\mu_k$, $\tilde\mu_{-k}=i\tilde\mu_k$ and $-i\tilde\mu_k$ for $k=1,2, \dots,$ and no other zeros.\\
{\rm Case 2:} $p_1=0,~  p_2=2$,  $\phi _0$ has a zero of multiplicity $2$ at $0$,  simple zeros at
\begin{equation*}\tilde\mu_k=(2k-1)\frac{\pi}{2a}, ~ k=1,2,\dots,\end{equation*} 
simple zeros at $-\tilde \mu_k$, $\tilde \mu_{-k}=i\tilde\mu_k$ and $-i\tilde \mu_k$, $k=1,2,\dots$, and no other zeros.\\
{\rm Case 3:} $p_1=0$,  $p_2=3$, $\phi _0$ has a zero of multiplicity $6$ at $0$, exactly one simple zero $\tilde\mu_k$ in each interval \allowdisplaybreaks\\ $\left(\left(k-\frac12\right)\frac\pi a, \left(k+\frac12\right)\frac\pi a\right)$ for positive integers $k$ with asymptotics
\begin{equation*}\tilde\mu_k=(4k-5)\frac\pi{4a}+o(1),~k=2,3,\dots,\end{equation*}
simple zeros at $\tilde\mu_k$, $-\tilde\mu_k$, $\tilde\mu_{-k}=i\tilde\mu_k$ and $-i\tilde\mu_k$ for $k=3,4,\dots,$ and no other zeros.\\
  {\rm Case 4:} $p_1=1,~  p_2=2$,   $\phi _0$ has a zero of multiplicity $6$ at $0$, exactly one simple zero $\tilde\mu_k$ in each interval \allowdisplaybreaks\\ $\left(\left(k-\frac12\right)\frac\pi a,\left(k+\frac12\right)\frac\pi a\right)$ for positive integers $k$ with asymptotics
\begin{equation*}\tilde\mu_k=(4k-5)\frac\pi{4a}+o(1),~k=2,3,\dots,\end{equation*}
simple zeros at $\tilde\mu_k$, $-\tilde\mu_k$, $\tilde\mu_{-k}=i\tilde\mu_k$ and $-i\tilde\mu_k$ for $k=3,4,\dots,$ and no other zeros.\\
{\rm Case 5:} $ p_1=1, ~  p_2=3$,  $\phi _0$ has a zero of multiplicity  $6$ at $0$,    simple zeros  at 
\begin{equation*}\tilde{\mu}_{k}= (k-1)\frac{\pi}{a}, ~k=2,3,\dots,
\end{equation*}
simple zeros at $-\tilde\mu_k$, $\tilde\mu_{-k}= i\tilde \mu_k$ and $-i\tilde \mu_k$ for $k=2,3,\dots$, and no other zeros.\\
{\rm Case 6:} $p_1=2, ~ p_2=3$,  $\phi _0$ has a zero of multiplicity  $6$ at $0$, exactly one simple zero $\tilde \mu_k$  in each interval
$\left(\left(k-\frac{1}{2}\right)\frac\pi a,\left(k+\frac{1}{2}\right)\frac \pi a \right)$
for positive integers $k$ with asymptotics
\begin{equation*}\tilde{\mu}_{k}= (4k-5)\frac{\pi}{4a}+o(1), ~k=2,3,\dots,
\end{equation*}
simple zeros at $-\tilde \mu_k$, $\tilde\mu_{-k}= i\tilde \mu_k$ and $-i\tilde \mu_k$ for $k=2,3,\dots$, and no other zeros.\end{lem}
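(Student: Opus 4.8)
The plan is to proceed case by case exactly as in the proof of Lemma \ref{Class1phizero}, since the six formulas for $\phi_0$ in Case$^{(a)}$ 2 are, up to powers of $\mu$ and sign changes, the same four distinct trigonometric-hyperbolic expressions that appeared there. First I would note that the multiplicity of the zero at $0$ is read off directly from the lowest-order term in the power series expansion of each $\phi_0$, using $\sin(\mu a)=\mu a-\frac{(\mu a)^3}{6}+\cdots$, $\sinh(\mu a)=\mu a+\frac{(\mu a)^3}{6}+\cdots$, $\cos(\mu a)=1-\frac{(\mu a)^2}{2}+\cdots$, $\cosh(\mu a)=1+\frac{(\mu a)^2}{2}+\cdots$; this gives multiplicity $2$ in Cases 1 and 2 (factor $\mu$ resp.\ $\mu^2$ times a function nonvanishing at $0$), and multiplicity $6$ in Cases 3--6 (the bracketed trigonometric combination itself vanishes to order $3$ or $4$ at $0$, multiplied by $\mu^3$, $\mu^3$, $\mu^4$, $\mu^5$ respectively, with cancellation accounting for the stated values).

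Next, for the zeros on the positive real axis: in Case 2 the zeros of $\cos(\mu a)$ are explicit, giving $\tilde\mu_k=(2k-1)\frac{\pi}{2a}$; in Case 5 the zeros of $\sin(\mu a)\sinh(\mu a)$ on the positive axis are the zeros of $\sin(\mu a)$, giving $\tilde\mu_k=(k-1)\frac{\pi}{a}$ with the indexing starting at $k=2$ to absorb the multiplicity-$6$ zero at the origin. In Cases 1, 3, 4, 6, dividing by $\cos(\mu a)\cosh(\mu a)$ (which is nonzero whenever $\phi_0(\mu)=0$, $\mu\neq0$) reduces the equation to $\tan(\mu a)\pm\tanh(\mu a)=0$: the $+$ sign in Case 1 and the $-$ sign in Cases 3, 4, 6. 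Using $\tan'\geq1$, $0<\tanh'<1$, the function $\mu\mapsto\tan(\mu a)\pm\tanh(\mu a)$ is strictly increasing from $-\infty$ to $+\infty$ on each interval $\left(\left(k-\tfrac12\right)\frac{\pi}{a},\left(k+\tfrac12\right)\frac{\pi}{a}\right)$, hence has exactly one simple zero there, and since $\tanh(\mu a)\to1$ the asymptotics are $(4k-1)\frac{\pi}{4a}+o(1)$ when the sign is $+$ and $(4k-5)\frac{\pi}{4a}+o(1)$ when the sign is $-$; the index ranges and the "no zero in the first interval" statements are then bookkeeping matched to the order of the zero at $0$ so that the total zero count is correct.

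Finally, to show every zero lies on the real or imaginary axis, I would reuse the product-to-sum identity of \eqref{Case 3}: each $\phi_0$ is, up to a $\mu$-power, a combination $(1\mp i)\sin((1+i)\mu a)+(1\pm i)\sin((1-i)\mu a)$ (Cases 1, 3, 4, 6), or a product such as $\sin(\mu a)\sinh(\mu a)$ or $\cos(\mu a)\cosh(\mu a)$ (Cases 2, 5) which factors through $\cos((1\pm i)\mu a)$-type expressions after another product-to-sum step. Writing $(1+i)\mu a=x+iy$, the vanishing condition forces $|\sin(x+iy)|=|\sin(y-ix)|$, equivalently $\cosh^2|x|+\cos^2|x|=\cosh^2|y|+\cos^2|y|$; since $t\mapsto\cosh^2 t+\cos^2 t$ is strictly increasing on $[0,\infty)$, this gives $|x|=|y|$, hence $\mu=\frac{1\pm i}{1+i}\frac{x}{a}$ is real or purely imaginary, and the four-fold symmetry from $\phi_0(i\mu)=\pm\phi_0(\mu)$ places the off-axis (i.e.\ imaginary-axis) zeros as claimed. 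The main obstacle, though purely clerical, is keeping the index shifts and multiplicities at the origin consistent across all six cases so that "with proper counting" genuinely holds; the analytic content is identical to Lemma \ref{Class1phizero} and presents no new difficulty. Thus it suffices to remark that Cases 3 and 4 are identical to each other and reduce to the Case 3 argument of Lemma \ref{Class1phizero}, Case 1 to the Case 1 argument there, Case 6 follows from Case 1 by the extra $\mu$-power and sign change, and Cases 2 and 5 are immediate from the explicit zeros of $\cos$ and $\sin$.
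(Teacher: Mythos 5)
Your overall strategy is exactly the paper's: the published proof of Lemma \ref{class3phizero} consists of the single remark that it is similar to the proof of Lemma \ref{Class1phizero}, and your case-by-case reduction to $\tan(\mu a)\pm\tanh(\mu a)=0$, the power-series count of the zero at the origin, and the reuse of the product-to-sum identity to confine all zeros to the real and imaginary axes are precisely the intended ingredients. However, there is a concrete error in the step where you match the sign of $\tan(\mu a)\pm\tanh(\mu a)$ to the asymptotic constant. First, Case 6 has the bracket $\sin(\mu a)\cosh(\mu a)+\cos(\mu a)\sinh(\mu a)$ (the same one as in Case 3 of Lemma \ref{Class1phizero}), so it reduces to $\tan(\mu a)+\tanh(\mu a)=0$, not to the minus sign as you assert; only Cases 3 and 4, with bracket $\cos(\mu a)\sinh(\mu a)-\sin(\mu a)\cosh(\mu a)$, give $\tan(\mu a)-\tanh(\mu a)=0$. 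Second, and more importantly, the two signs do not yield the constants you assign to them: $\tan(\mu a)+\tanh(\mu a)=0$ forces $\tan(\mu a)\to-1$, hence $\mu a\to m\pi-\frac{\pi}{4}$ and numerators congruent to $3$ modulo $4$ (this is $(4k-1)$ or $(4k-5)$ depending only on where the index starts, which is why Cases 1 and 6 come out right), whereas $\tan(\mu a)-\tanh(\mu a)=0$ forces $\tan(\mu a)\to+1$, hence $\mu a\to m\pi+\frac{\pi}{4}$ and numerators congruent to $1$ modulo $4$.

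Consequently, in Cases 3 and 4 your derivation, carried out correctly, produces a first positive zero near $\frac{5\pi}{4a}$ and the asymptotics $\tilde\mu_k=(4k-3)\frac{\pi}{4a}+o(1)$ for $k=2,3,\dots$, which differs by $\frac{\pi}{2a}$ from the constant $(4k-5)\frac{\pi}{4a}$ that you record; no re-indexing absorbs this, since an index shift changes the numerator by a multiple of $4$. You can cross-check against the leading-order relation $e^{-2i\tau_0a}=-\psi_1/\psi_4=-(1+i)/(1-i)=-i$ coming from \eqref{Cl3psi1case3} and \eqref{Cl3psi4case3}, which forces $\tau_0\equiv\frac{\pi}{4a}$ modulo $\frac{\pi}{a}$. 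So either your sign-to-constant assignment or the constant stated in Cases 3 and 4 must be revisited; as written, the proof asserts a conclusion that its own computation does not deliver. A smaller slip: your parenthetical that the brackets in Cases 3--6 vanish to order $3$ or $4$ at the origin is off for Cases 5 and 6, where they vanish to orders $2$ and $1$ respectively, although the resulting multiplicity $6$ is correct in every case.
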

\begin{proof}The proof of Lemma \ref{class3phizero}  is similar to the proof of Lemma \ref{Class1phizero}.\end{proof}

\begin{prop}\label{Class3phizeroProp}For $g=0$, there exists a positive integer $k_0$ such the eigenvalues $\hat\lambda_k$, $k\in\mathbb Z$  of the problems \eqref{eq:2inteq0}, \eqref{case1}, where $(p_3,q_3)=(2,1)$   and  $(p_4,q_4)=(3,0), $ are $\hat\lambda_{-k}=-\overline{\hat{\lambda}_k}$,   $\hat\lambda_k=\hat\mu^2_k$ for $k\ge k_0$  and the $\hat\mu_k$ have the following asymptotic representations as $k\to\infty$:\\\\
{\rm Case 1:} $ p_1=0, ~  p_2=1$,  \quad $ \hat\mu_k=(4k-1)\dfrac\pi{4a}+o(1).$\\\\
{\rm Case 2:} $p_1=0,~  p_2=2$,  \quad $\hat\mu_k=(2k-1)\dfrac\pi {2a}+o(1)$.\\\\
{\rm Case 3:} $ p_1=0,~   p_2=3$, \quad$\hat\mu_k=(4k-5)\dfrac\pi{4a}+o(1)$.\\\\
{\rm Case 4:} $p_1=1,~  p_2=2$,   \quad $\hat\mu_k=(4k-5)\dfrac\pi{4a}+o(1)$.\\\\
{\rm Case 5:} $ p_1=1, ~  p_2=3$, \quad $\hat\mu_k=(k-1)\dfrac\pi{ a}+o(1)$.\\\\
{\rm Case 6:} $p_1=2, ~ p_2=3$,  \quad $\hat\mu_k=(4k-5)\dfrac\pi{4a}+o(1)$.\\\\
 In particular, the number of the pure imaginary eigenvalues is odd in each case.
\end{prop}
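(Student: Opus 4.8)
The plan is to mirror the proof of Proposition \ref{Class1phizeroProp} essentially verbatim, since the structure of the argument is identical: one writes $\phi$ as $-\tfrac12\beta_3\beta_4\phi_0$ plus lower-order terms, forms $\phi_1(\mu)=(2\phi(\mu)+\beta_3\beta_4\phi_0(\mu))/\phi_0(\mu)$, shows $|\phi_1(\mu)|<1$ on suitable families of contours (small squares $R_{k,\varepsilon}$ around the zeros $\tilde\mu_k$ of $\phi_0$ from Lemma \ref{class3phizero}, and large squares $S_k$), and applies Rouch\'e's theorem to conclude that $\phi$ and $\phi_0$ have the same zero counts. I would start with one representative case, say Case 1 (or Case 3, whichever produces the cleanest $\phi_1$), compute $\phi$ explicitly from \eqref{Class3} using \eqref{jDerivative}, divide through by $\phi_0$, and verify that each remaining summand of $\phi_1$ carries a factor $\mu^{-1}$ or $\mu^{-2}$ together with a bounded trigonometric/hyperbolic ratio of the type $1/(\tan(\mu a)\pm\tanh(\mu a))$ or $\cot(\mu a)\coth(\mu a)/\mu^2$, exactly as in \eqref{Case3proofPhi1} and \eqref{Case2Phi1}. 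The uniform bounds on $\tanh$, $\cosh$, $\cos$ away from their zeros established in \eqref{tan1}--\eqref{asym16eqcos} then give $|\phi_1|<1$ on $R_{k,\varepsilon}$ for $k$ large and on $S_k$ for $k$ large, and Rouch\'e transfers the zero asymptotics of $\phi_0$ recorded in Lemma \ref{class3phizero} to $\phi$.

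Once Case 1 is done, the remaining cases follow by the same reduction steps already used in Proposition \ref{Class1phizeroProp}: Cases 3 and 4 coincide up to a constant, Case 1 and Case 6 differ only by a power of $\mu$ and sign changes in the trigonometric factors (from passing to derivatives via \eqref{jDerivative} and the shifts in $\sigma(1),\sigma(2)$ listed in \eqref{Dcases}), and Cases 2 and 5 are related to each other the same way, so one only needs to note in each case that multiplication by a fixed power of $\mu$ and replacement of trigonometric/hyperbolic functions by their derivatives leaves the ratio $\phi_1$ unchanged up to the already-controlled factors. I would then record, using \eqref{jDerivative}, the explicit leading terms of $\phi$ for each of the six cases and read off the asserted asymptotics $\hat\mu_k$ directly from the $\tilde\mu_k$ of Lemma \ref{class3phizero}: in Cases 3, 4, 6 the relevant trigonometric equation is $\tan(\mu a)+\tanh(\mu a)=0$ giving $(4k-5)\tfrac\pi{4a}$; in Case 1 it is $\cos(\mu a)\sinh(\mu a)+\sin(\mu a)\cosh(\mu a)=0$, i.e. $\tan(\mu a)=-\tanh(\mu a)$, giving $(4k-1)\tfrac\pi{4a}$; in Case 2 the zeros of $\cos(\mu a)\cosh(\mu a)$ give $(2k-1)\tfrac\pi{2a}$; and in Case 5 the zeros of $\sin(\mu a)\sinh(\mu a)$ give $(k-1)\tfrac\pi a$.

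The final assertion --- that the number of pure imaginary eigenvalues is \emph{odd} in each case --- is where the contrast with Proposition \ref{Class1phizeroProp} lies and is the one genuinely new point. As in Case 3 there, the eigenvalues with nonzero real part occur in pairs $\hat\lambda_k,\,-\overline{\hat\lambda_k}$, so they can be indexed by $\hat\lambda_{-k}=-\overline{\hat\lambda_k}$ over a symmetric index set; the pure imaginary eigenvalues occupy the remaining indices, and their parity equals the parity of the total multiplicity count modulo the even contribution of the paired ones. The counting argument runs through $\phi_0$: by Lemma \ref{class3phizero}, for Class Case$^{(a)}$ 2 the multiplicity of the zero of $\phi_0$ at the origin is $2$ in Cases 1--2 and $6$ in Cases 3--6 (odd numbers of conjugate-quadruples plus an odd leftover), as opposed to the multiplicities $4$ or $8$ in Lemma \ref{Class1phizero}; since each nonzero eigenvalue contributes two zeros to $\phi$ while the total number of zeros of $\phi$ inside $S_k$ matches that of $\phi_0$, the residual count of zeros not accounted for by the real pairs is odd, forcing an odd number of pure imaginary eigenvalues. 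The main obstacle, then, is not the perturbation/Rouch\'e machinery — that is routine once Proposition \ref{Class1phizeroProp} is in hand — but carefully bookkeeping the zero of $\phi$ at $\mu=0$ and the finitely many small $|k|$ eigenvalues so that the parity claim is airtight; I would handle this by counting zeros of $\phi$ inside $S_k$ via Rouch\'e against $\phi_0$, subtracting the $2k+\text{(const)}$ zeros on each of the four open semiaxes coming in quadruples, and checking that the parity of what remains is odd in all six cases.
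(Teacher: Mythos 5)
Your overall strategy is the one the paper follows: compute $\phi$ explicitly for a representative case, compare it with $\phi_0$ via an auxiliary ratio $\phi_1$, run Rouch\'e on the small squares $R_{k,\varepsilon}$ and the large squares $S_k$ exactly as in the proof of Proposition \ref{Class1phizeroProp}, and read the asymptotics off Lemma \ref{class3phizero}. Your parity argument is also essentially the paper's: the zero of $\phi_0$ at the origin has multiplicity $\equiv 2\pmod 4$ in every case of Class Case$^{(a)}$ 2 (namely $2$ or $6$), so the total zero count of $\phi$ in $S_k$ is $\equiv 2\pmod 4$, the eigenvalue count is odd, and after removing the evenly many eigenvalues paired as $\hat\lambda_k,-\overline{\hat\lambda_k}$ an odd number of pure imaginary ones remains.

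There is, however, one concrete error in your setup. You propose to write $\phi=-\tfrac12\beta_3\beta_4\phi_0+\text{(lower order)}$ and to use $\phi_1=(2\phi+\beta_3\beta_4\phi_0)/\phi_0$, carrying over the normalization from Class Case$^{(a)}$ 1. For Class Case$^{(a)}$ 2 this is wrong: here the highest $\mu$-power of $\phi$ comes from the cross term $i\beta_3\mu^2\bigl[y'_{\sigma(1)}(a)y^{(3)}_{\sigma(2)}(a)-y^{(3)}_{\sigma(1)}(a)y'_{\sigma(2)}(a)\bigr]$ displayed in \eqref{highestClass3}, not from the $\beta_3\beta_4\mu^4[\cdots]$ term, because the latter involves only the lower-order derivatives $y_{\sigma(j)},y'_{\sigma(j)}$ and hence a lower net power of $\mu$. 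Concretely, \eqref{3phiCase3} gives $\phi=\tfrac{i\beta_3}{2}\phi_0+O(\mu^{-1}\phi_0)$ in Case 3, and \eqref{3phiCase2} gives $\phi=i\beta_3\phi_0+O(\mu^{-1}\phi_0)$ in Case 2. With your $\phi_1$ one gets $\phi_1\to i\beta_3+\beta_3\beta_4\neq 0$ in general, the bound $|\phi_1|<1$ fails on the contours, and the Rouch\'e comparison does not close. The fix is mechanical --- replace $\beta_3\beta_4$ by $-i\beta_3$ (up to the case-dependent factor of $2$) in the definition of $\phi_1$ and note that $\beta_3\neq 0$ --- and the estimates \eqref{tan1}--\eqref{asym16eqcos} then apply verbatim; but as written the key inequality of your argument is false. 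A second, smaller slip: in Cases 3 and 4 of this class $\phi_0$ is a multiple of $\cos(\mu a)\sinh(\mu a)-\sin(\mu a)\cosh(\mu a)$, so the relevant equation is $\tan(\mu a)-\tanh(\mu a)=0$, not $\tan(\mu a)+\tanh(\mu a)=0$ as you state; only Cases 1 and 6 lead to $\tan(\mu a)+\tanh(\mu a)=0$.
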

\begin{proof}    Case 3:    A straightforward calculation gives 
\begin{align}\label{3phiCase3}\phi(\mu)&=\dfrac{i\beta_3\mu^3}2(\cos(\mu a)\sinh(\mu a)-\sin(\mu a)\cosh(\mu a))\nonumber\allowdisplaybreaks\\&\quad-\dfrac{(1-\beta_3\beta_4)\mu^2}2\sin(\mu a)\sinh(\mu a)\nonumber\allowdisplaybreaks\\&\quad-\dfrac{i\beta_4\mu}2(\sin(\mu a)\cosh(\mu a)+\cos(\mu a)\sinh(\mu a)).\end{align}
All the estimates are as in Case 3 of the proof of Proposition \ref{Class1phizeroProp} and the result  follows from that in Case 3 of the proof Proposition \ref{Class1phizeroProp}.

The results in Case 1, Case 4 and Case 6 follow from reasonings respectively similar to those in Case 1, Case 4 and Case 6  of the proof  of Proposition \ref{Class1phizeroProp}.\\
Case 2:  A straightforward calculation gives 
\begin{align}\label{3phiCase2}\phi(\mu)&=i\beta_3\mu^2\cos(\mu a)\cosh(\mu a)\nonumber\allowdisplaybreaks\\&\quad+\dfrac{(1-\beta_3\beta_4)\mu}2(\cos(\mu a)\sinh(\mu a)-\sin(\mu a)\cosh(\mu a))\nonumber\allowdisplaybreaks\\&\quad-i\beta_4\sin(\mu a)\sinh(\mu a).\end{align}
All the estimates are as in Case 2 of the proof of Proposition \ref{Class1phizeroProp} and the result  follows from that in Case 2 of the proof Proposition \ref{Class1phizeroProp}.

The result  in Case 5 follows from reasonings   similar to those in Case 5    of the proof  of Proposition \ref{Class1phizeroProp}.  \end{proof}


\section{Asymptotics of eigenvalues}\label{gen:g}
Let $D$ be the characteristic function of  the problems  \eqref{eq:2inteq0}, \eqref{case1} for Case$^{(a)}$ 1 and Case$^{(a)}$ 2 with respect to the fundamental system $y_j$,
 $j=1,2,3,4$, with $y_j^{[m]}(0)=\delta_{j,m+1}$ for $m=0,1,2,3$, where $\delta$ is the Kronecker delta.
Denote by $D_0$ the  corresponding characteristic function for $g=0$. Note that the characteristic functions $D_0$ and $\phi_0 $ considered in Section \ref{s:g=0} have the same zeros, counted with multiplicity.
Due to the Birkhoff regularity, $g$ only  influences lower order terms in $D$. Therefore it can be inferred that away from the small squares $R_k$,
 $-R_{k}$,  $i{R}_k$, $-i{R}_{-k}$   around the zeros of $D_0$,
 $|D(\mu)-D_0(\mu)|<|D_0(\mu)|$ if $|\mu|$ is sufficiently large. Since the fundamental system $y_j$,
 $j=1,2,3,4$, depends analytically on $\mu$, also $D$ and $D_0$ are analytic functions.
 Hence applying Rouch\'e's theorem both to the large squares $S_k$   and to the small squares
which are sufficiently far away from the origin, it  follows that the boundary value problem for general $g$ has the same
asymptotic distribution as for $g=0$ for Case$^{(a)}$ 1 and Case$^{(a)}$ 2 respectively. Whence Proposition \ref{Class1phizeroProp}  leads to

\begin{prop}\label{Class1Prop}For $g\in C^{1}[0,a]$, there exists a positive integer $k_0$ such the eigenvalues $\hat\lambda_k$, $k\in\mathbb Z$    of the problem \eqref{eq:2inteq0}, \eqref{case1}, where $B_1(\lambda)y=y^{[p_1]}(0)$, $B_2(\lambda)y=y^{[p_2]}(0)$, $B_3(\lambda)y=y'(a)+i\beta_3\lambda y(a)$, $B_4(\lambda)y=y^{[3]}(a)+i\beta_4\lambda y''(a)$ are  $\hat\lambda_{-k}=-\overline{\hat{\lambda}_k}$,  $\hat\lambda_k=\hat\mu^2_k$ for $k\ge k_0$ and  the $\hat\mu_k$ have the following asymptotic representations as $k\to\infty$:\\\\
{\rm Case 1:} $ p_1=0, ~  p_2=1$,  \quad $ \hat\mu_k=(4k-3)\dfrac\pi{4a}+o(1).$\\\\
{\rm Case 2:} $p_1=0,~  p_2=2$,  \quad $\hat\mu_k=(k-1)\dfrac\pi a+o(1)$.\\\\
{\rm Case 3:} $ p_1=0,~   p_2=3$, \quad$\hat\mu_k=(4k-5)\dfrac\pi{4a}+o(1)$.\\\\
{\rm Case 4:} $p_1=1,~  p_2=2$,   \quad $\hat\mu_k=(4k-5)\dfrac\pi{4a}+o(1)$.\\\\
{\rm Case 5:} $ p_1=1, ~  p_2=3$, \quad $\hat\mu_k=(2k-1)\dfrac\pi{2a}+o(1)$.\\\\
{\rm Case 6:} $p_1=2, ~ p_2=3$,  \quad $\hat\mu_k=(4k-7)\dfrac\pi{4a}+o(1)$. \\\\
 In particular, there is an even  number of the pure imaginary eigenvalues  in each case.
\end{prop}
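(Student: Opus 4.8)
The plan is to obtain Proposition \ref{Class1Prop} as a perturbation-theoretic corollary of Proposition \ref{Class1phizeroProp}, exactly along the lines sketched in the paragraph preceding the statement. First I would set up the characteristic function $D(\cdot,\mu)$ of the problem \eqref{eq:2inteq0}, \eqref{case1} with respect to the canonical fundamental system $y_j$ normalised by $y_j^{[m]}(0)=\delta_{j,m+1}$, $m=0,1,2,3$, and observe that, since $B_1$ and $B_2$ involve only quasi-derivatives at $0$, the first two rows of the matrix $M(\lambda)$ again have a single nonzero entry equal to $1$; hence $\det M(\lambda)=\pm D(\mu)$ with $D$ built from the $2\times2$ minor in the rows $B_3,B_4$ and the columns $\sigma(1),\sigma(2)$ as in \eqref{Dcases}. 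Setting $g=0$ recovers the function $\phi$ of Section \ref{s:g=0}, so $D_0=\pm\phi$ and in particular $D_0$ and $\phi_0$ have the same zeros with multiplicity.

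The key analytic input is that the problem is Birkhoff regular (established via Proposition \ref{main}, since the left endpoint conditions satisfy $C(1,0)$ and the right endpoint conditions satisfy $C(4,1)$ for Case$^{(a)}$ 1). Birkhoff regularity gives an asymptotic fundamental system in which the dependence on $g$ enters only through strictly lower-order terms in $\mu$; consequently, on the large squares $S_k$ with vertices $\pm k\pi/a\pm ik\pi/a$ and on the small squares $R_{k,\varepsilon}$, $-R_{k,\varepsilon}$, $iR_{k,\varepsilon}$, $-iR_{k,\varepsilon}$ used in the proof of Proposition \ref{Class1phizeroProp}, one has the estimate $|D(\mu)-D_0(\mu)|<|D_0(\mu)|$ for all $\mu$ sufficiently far from the origin. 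Applying Rouch\'e's theorem to $S_k$ shows $D$ and $D_0$ have equally many zeros inside each large square; applying it to each small square shows that near each zero $\tilde\mu_k$ (resp.\ $-\tilde\mu_k$, $\pm i\tilde\mu_k$) of $D_0$ there is exactly one zero $\hat\mu_k$ of $D$. Counting as in the proof of Proposition \ref{Class1phizeroProp}, $D_0$ has $4k+4$ zeros inside $S_k$, so $D$ has no other large zeros; the six cases are then read off verbatim from the $g=0$ asymptotics, namely $\hat\mu_k=(4k-3)\pi/(4a)+o(1)$ in Case 1, $(k-1)\pi/a+o(1)$ in Case 2, $(4k-5)\pi/(4a)+o(1)$ in Cases 3 and 4, $(2k-1)\pi/(2a)+o(1)$ in Case 5, and $(4k-7)\pi/(4a)+o(1)$ in Case 6.

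To finish, I would record the symmetry structure: since $g$ is real valued the differential expression is formally symmetric in the quasi-derivative sense, the coefficient operators $M$ and $K$ of the pencil \eqref{OperaPencil} satisfy the reality relations that force $D(-\bar\mu)=\overline{D(\mu)}$ (equivalently $\overline{L(\lambda)}$ corresponds to $L(-\bar\lambda)$), so the nonzero eigenvalues come in pairs $\hat\lambda_k,\ -\overline{\hat\lambda_k}$; choosing the indexing $\hat\lambda_{-k}=-\overline{\hat\lambda_k}$ with $\Re\hat\lambda_k\ge0$ is then consistent, and the number of remaining (pure imaginary) indices has the same parity as in the $g=0$ case, namely even, by the Rouch\'e count on the small symmetric squares. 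The main obstacle is the Birkhoff-regularity estimate $|D(\mu)-D_0(\mu)|<|D_0(\mu)|$ away from the exceptional squares: one must check that the $g$-dependent corrections to the fundamental system $y_j$ are genuinely of lower order in $\mu$ uniformly on $S_k$ and on the small squares, which is where the hypotheses of Proposition \ref{main} are really used; once this is in hand the rest is a bookkeeping exercise transferring the conclusions of Proposition \ref{Class1phizeroProp} to general $g$.
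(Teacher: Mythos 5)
Your proposal is correct and follows essentially the same route as the paper: the paper's proof is precisely the Birkhoff-regularity argument that $g$ only perturbs lower-order terms of the characteristic function, so that $|D(\mu)-D_0(\mu)|<|D_0(\mu)|$ away from the small squares and Rouch\'e's theorem (on both the large squares $S_k$ and the small squares) transfers the zero distribution of Proposition \ref{Class1phizeroProp} to general $g$. One minor slip worth noting: the $\mu$-level symmetry that yields $\hat\lambda_{-k}=-\overline{\hat\lambda_k}$ is $\mu\mapsto i\bar\mu$ (so that $\mu^2\mapsto-\overline{\mu^2}$), not $D(-\bar\mu)=\overline{D(\mu)}$, although your $\lambda$-level formulation and the parity count are the correct ones and agree with how the paper argues.
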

However  Proposition \ref{Class3phizeroProp} leads to  
\begin{prop}\label{Class3Prop}For $g\in C^{1}[0,a]$, there exists a positive integer $k_0$ such the eigenvalues $\hat\lambda_k$, $k\in\mathbb Z$  of the problem \eqref{eq:2inteq0}, \eqref{case1}, where $B_1(\lambda)y=y^{[p_1]}(0)$, $B_2(\lambda)y=y^{[p_2]}(0)$, $B_3(\lambda)y=y''(a)+i\beta_3\lambda y'(a)$, $B_4(\lambda)y=y^{[3]}(a)+i\beta_4\lambda y(a)$ are   $\hat\lambda_{-k}=-\overline{\hat{\lambda}_k}$, $\hat\lambda_k=\hat\mu^2_k$ for $k\ge k_0$    and the $\hat\mu_k$ have the following asymptotic representations as $k\to\infty$:\\\\
{\rm Case 1:} $ p_1=0, ~  p_2=1$,  \quad $ \hat\mu_k=(4k-1)\dfrac\pi{4a}+o(1).$\\\\
{\rm Case 2:} $p_1=0,~  p_2=2$,  \quad $\hat\mu_k=(2k-1)\dfrac\pi {2a}+o(1)$.\\\\
{\rm Case 3:} $ p_1=0,~   p_2=3$,  \quad$\hat\mu_k=(4k-5)\dfrac\pi{4a}+o(1)$.\\\\
{\rm Case 4:} $p_1=1,~  p_2=2$,   \quad $\hat\mu_k=(4k-5)\dfrac\pi{4a}+o(1)$.\\\\
{\rm Case 5:} $ p_1=1, ~  p_2=3$, \quad $\hat\mu_k=(k-1)\dfrac\pi{ a}+o(1)$.\\\\
{\rm Case 6:} $p_1=2, ~ p_2=3$, \quad $\hat\mu_k=(4k-5)\dfrac\pi{4a}+o(1)$. \\\\
 In particular, there is an odd  number of the pure imaginary eigenvalues  in each case.
\end{prop}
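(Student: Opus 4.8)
The plan is to repeat, for the boundary conditions of Case$^{(a)}$ 2, the perturbation-by-Rouch\'e argument already carried out in the paragraph preceding Proposition \ref{Class1Prop}, feeding in Proposition \ref{Class3phizeroProp} in place of Proposition \ref{Class1phizeroProp}. First I would let $D$ be the characteristic function of \eqref{eq:2inteq0}, \eqref{case1} with the Case$^{(a)}$ 2 boundary conditions, formed with respect to the canonical fundamental system $y_j$ normalised by $y_j^{[m]}(0)=\delta_{j,m+1}$, $m=0,1,2,3$, and let $D_0$ be the same function for $g=0$; both are entire in $\mu$, $D_0$ has exactly the zeros (with multiplicities) of the function $\phi_0$ of Subsection \ref{Class3:g=0}, and those zeros are catalogued in Lemma \ref{class3phizero}. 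Since the problem is Birkhoff regular (see the remark following \eqref{allclasses}), the coefficient $g$ enters $D$ only through terms of strictly lower $\mu$-order than the leading term of $D_0$, so there is $R>0$ with $|D(\mu)-D_0(\mu)|<|D_0(\mu)|$ both on the large squares $S_k$ with vertices $\pm k\frac\pi a\pm ik\frac\pi a$ and on the small squares $R_{k,\varepsilon}$, $-R_{k,\varepsilon}$, $iR_{k,\varepsilon}$, $-iR_{k,\varepsilon}$ about the non-zero zeros of $D_0$, whenever $|\mu|\ge R$.

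Next I would apply Rouch\'e's theorem exactly as in the proof of Proposition \ref{Class1phizeroProp}: on each small square far from the origin $D$ has precisely one simple zero $\hat\mu_k$, which therefore inherits the $o(1)$-asymptotics of the associated zero of $\phi_0$ listed in Lemma \ref{class3phizero}; on each large square $S_k$ the functions $D$ and $D_0$ carry the same number of zeros, so $D$ has no further large zeros. Because $D$ is even in $\mu$, every eigenvalue $\lambda$ accounts for the pair $\pm\sqrt\lambda$ of zeros of $D$; setting $\hat\lambda_k=\hat\mu_k^2$ for $|k|$ large and adjoining the finitely many small eigenvalues therefore exhausts the spectrum, and indexing the eigenvalues with non-zero real part by $\hat\lambda_{-k}=-\overline{\hat\lambda_k}$ (with $\Re\hat\lambda_k\ge0$) as in Proposition \ref{Class3phizeroProp}, the six asymptotic formulae for $\hat\mu_k$ are read off verbatim from Proposition \ref{Class3phizeroProp}.

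For the parity assertion I would copy the counting argument from the end of the proof of Proposition \ref{Class1phizeroProp}: the number of eigenvalues inside a large $S_k$ is one half of the number of zeros of $D$ there, which by Rouch\'e equals one half of the number of zeros of $D_0$ and hence has the same parity as in the case $g=0$; removing the even number of eigenvalues absorbed by the pairs $\hat\lambda_k,-\overline{\hat\lambda_k}$ with $\Re\hat\lambda_k\ne0$ leaves a count of pure imaginary eigenvalues of that same parity, namely odd for Case$^{(a)}$ 2 by Proposition \ref{Class3phizeroProp} --- the contrast with the even count of Proposition \ref{Class1Prop} reflecting the order of vanishing of $\phi_0$ at the origin, which is $2$ or $6$ for Case$^{(a)}$ 2 as opposed to $4$ or $8$ for Case$^{(a)}$ 1 (Lemma \ref{class3phizero} versus Lemma \ref{Class1phizero}).

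The single delicate point is the uniform bound $|D(\mu)-D_0(\mu)|<|D_0(\mu)|$ on the contours, which rests on the asymptotic expansion of the fundamental system $y_j(\cdot,\mu)$ for $g\in C^1[0,a]$ together with the fact that Birkhoff regularity confines every $g$-dependent contribution to strictly lower order; but this is precisely the estimate already established for Case$^{(a)}$ 1, so it can be cited rather than redone, and the remainder --- matching indices and checking which of the six cases of Lemma \ref{class3phizero} supplies which leading term and which vanishing order at $0$ --- is routine.
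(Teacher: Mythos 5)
Your proposal is correct and follows essentially the same route as the paper: the paper derives Proposition \ref{Class3Prop} directly from Proposition \ref{Class3phizeroProp} via the Birkhoff-regularity/Rouch\'e perturbation argument given at the start of Section \ref{gen:g}, exactly as you describe. The only cosmetic difference is that you spell out the parity count for the pure imaginary eigenvalues, which the paper leaves implicit by reference to the $g=0$ case.
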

In the remainder of the section we are going to establish more precise eigenvalue asymptotics of the problems of Case$^{(a)}$ 1 and Case$^{(a)}$ 2 respectively.
We again replace $\lambda$ with $\mu^2$. Then according to \cite[Theorem 8.2.1]{men-mol}, \eqref{eq:2inteq0}  has an asymptotic
fundamental system $\{\eta_1,\eta_2,\eta_3,\eta_4\}$ of the form
\begin{equation}\label{eq54can} \eta_{\nu}^{(j)}(x,\mu)=\delta_{\nu,j}(x,\mu)e^{i^{\nu-1}\mu x},\end{equation} where
\begin{equation}\label{eq55can}\delta_{\nu,j}(x,\mu)=\left[ \frac{d^j}{dx^j} \right] \left\{ \sum\limits_{r=0}^4(\mu i^{\nu-1})^{-r}\varphi_r(x)
e^{i^{\nu-1}\mu x} \right\} e^{-i^{\nu -1}\mu x}+o(\mu^{-4+j}),\end{equation}  $j=0,1,2,3$, where
  $[\frac{d^j}{dx^j}]$ means that we omit
those terms of the Leibniz expansion
which contain a function $\varphi_r^{(k)}$ with $k>4-r$.
Since the coefficient of $y^{[3]}$ in
\eqref{eq:2inteq0} is zero, we have $\varphi_0(x)=1$, see \cite[(8.2.3)]{men-mol}.

We will now determine the functions $\varphi_1$ and $\varphi_2$.
In this regard, observe from
\cite[(8.1.2) and (8.1.3)]{men-mol} that $n_0=0$ and $l=4$, see \cite[Theorem 8.1.2]{men-mol}. From \cite[(8.2.45)]{men-mol} we know that
\begin{equation}\label{phir}\varphi _r=\varphi _{1,r}=\varepsilon _1^{\text{\sf T}}V Q^{[r]}
\varepsilon _1,
\end{equation}
 where $\varepsilon_{\nu}$ is the $\nu$-th unit vector in $\C^4$, $V=(i^{(j-1)(k-1)})_{j,k=1}^4$, and  $Q^{[r]}$ are $4\times 4$ matrices given by
\cite[(8.2.28), (8.2.33) and (8.2.34)]{men-mol}, that is, $Q^{[0]}=I_4$,
\begin{align}\label{eq8.2.33*men-mol}
& \Omega_4 Q^{[1]}-Q^{[1]}\Omega_4 =Q^{[0]'}=0,\\
\label{eq8.2.33men-mol}& \Omega_4 Q^{[2]}-Q^{[2]}\Omega_4 =Q^{[1]'}-\frac{1}{4}g\Omega_4\varepsilon \varepsilon^{\top}
\Omega_4^{-2}Q^{[0]},\\
\label{eq8.2.34men-mol}&0=\varepsilon_{\nu}^{\text{\sf T}}\Bigl(Q^{[2]'}+\frac{1}{4}\sum_{j=1}^2k_{3-j}\Omega_4\varepsilon\varepsilon^{\text{\sf T}}
\Omega_4^{-1-j}Q^{[2-j]}\Bigl)\varepsilon_{\nu}\quad (\nu=1,2,3,4),
\end{align}
where  $k_2=-g$, $k_1=-g'$,  $\Omega_4=\diag(1,i,-1,-i)$ and
$\varepsilon^{\text{\sf T}}=(1,1,1,1)$.
Let $G(x)=\int_0^xg(t)dt$. A lengthy but straightforward calculation gives
\begin{equation}\label{varphi1and2}
\varphi _1=\frac14G,\quad \varphi _2=\frac1{32}G^2-\frac18g
\end{equation}
 and thus
\begin{align}\label{eq36can*}
\eta_\nu&=\left(1+\frac{1}{4}i^{-\nu+1}G\mu^{-1}+(-1)^{\nu-1}\left(\frac{1}{32}G^2-\frac{1}{8}g\right)\mu^{-2}\right)e^{i^{\nu-1}\mu x}\nonumber\\&+
\{o(\mu^{-2})\}_{\infty}e^{i^{\nu-1}\mu x}
\end{align}
for $\nu=1,2,3,4$, where $\{o(\ )\}_{\infty}$ means that the estimate is uniform in $x$.

Next we provide  the first four terms of the eigenvalue asymptotics of the problems \eqref{eq:2inteq0}, \eqref{case1} for Case$^{(a)}$ 1 and Case$^{(a)}$ 2 respectively. We are going to start with the problems of Case$^{(a)}$ 1.

The characteristic function of \eqref{eq:2inteq0}, \eqref{case1} for the problems of Case$^{(a)}$ 1 is
$$D(\mu)=\det(\gamma_{j,k}\exp(\varepsilon_{j,k}))_{j,k=1}^4,$$ where
\begin{align*}
&\varepsilon_{1,k}=\varepsilon_{2,k}=0, ~ \varepsilon_{3,k}=\varepsilon_{4,k}=i^{k-1}\mu a,~\gamma_{1,k}=\delta_{k,p_1}(0,\mu),~ 
\\&\gamma_{2,k}=\delta_{k,2}(0,\mu) \text{ if } p_2\le2,
~\gamma_{2,k}=\delta_{k,3}(0,\mu)-g(0)\delta_{k,1}(0,\mu) \text{ if } p_2=3,\\
&\gamma_{3,k}=\delta_{k,1}(a,\mu)+i\beta_3\mu^2\delta_{k,0}(a,\mu),\\
& \gamma_{4,k}=\delta_{k,3}(a,\mu)-g(a)\delta_{k,1}(a,\mu)+i\beta_4\mu^2\delta_{k,2}(a,\mu).\end{align*}
  Note that
\begin{equation}\label{eq2bndg11}D(\mu)=\sum\limits_{m=1}^5\psi_m(\mu)e^{\omega_m \mu a}, \end{equation} where
$\omega_1=1+i$, $\omega_2=-1+i$, $\omega_3=-1-i$, $\omega_4=1- i$, $\omega_5=0$.
The functions $\psi_1,\dots,\psi_5$ have the asymptotics $c_k\mu^k+c_{k-1}\mu^{k-1}+\dots+c_{k_0}\mu^{k_0}+o(\mu^{k_0})$.

It follows from \eqref{eq2bndg11} that
\begin{align}\label{eq8bndg11}D_1(\mu):=D(\mu)e^{-\omega_1\mu a}=\psi_1(\mu)+\sum\limits_{m=2}^5\psi_m(\mu)e^{(\omega_m-\omega_1)\mu a},\end{align}
where $\omega_2-\omega_1=-2$,  $\omega_3-\omega_1=-2-2i$, $\omega_4-\omega_1=-2i$, $\omega_5-\omega_1=-1-i$.

Thus for $\arg \mu\in [-\frac{3\pi}{8},\frac{\pi}{8}]$, we have
$|e^{(\omega_m-\omega_3)\mu a}|\le e^{-\sin \frac{\pi}{8}|\mu|a}$ for $m=2,3,5$ and the terms
$\psi_m(\mu)e^{(\omega_m-\omega_1)\mu a}$ for $m=2,3,5$ can be absorbed by $\psi_1(\mu)$ as
they are of the form $o(\mu^{-s})$ for any integer $s$. Hence for $\arg\mu\in[-\frac{3\pi}{8},\frac{\pi}{8}]$,
\begin{align}
\label{eq8*bndg11}D_1(\mu)=\psi_1(\mu)+\psi_4(\mu)e^{(\omega_4-\omega_3)\mu a}=\psi_1(\mu)+\psi_4(\mu)e^{-2i\mu a},\end{align}
where
\begin{align}\label{eq0bndg31}\psi_1(\mu)
&=\left[ \gamma _{13}\gamma _{24}-\gamma _{23}\gamma _{14}  \right] \,
\left[ \gamma _{31}\gamma _{42}-\gamma _{32}\gamma _{41}  \right],\allowdisplaybreaks\\
\label{eq1bndg31}\psi_4(\mu)
&=\left[ \gamma _{12}\gamma _{23}-\gamma _{22}\gamma _{13} \right] \,
\left[ \gamma _{31}\gamma _{44}-\gamma _{34}\gamma _{41} \right].\allowdisplaybreaks
\end{align}
A straightforward calculation gives
\begin{align}\nonumber\gamma _{31}\gamma _{42}-\gamma _{32}\gamma _{41}
&=2\beta_3\beta_4\mu^6+(1-i)(2\beta_3\beta_4\phi_1(a)+(\beta_3+\beta_4))\mu^5\allowdisplaybreaks\\\label{eq02bndg31}&\quad 
-2i(\beta_3\beta_4\phi^2(a)+(\beta_3+\beta_4)\phi(a)+1)\mu^4+o(\mu^4),\allowdisplaybreaks\\ \nonumber
\gamma _{31}\gamma _{44}-\gamma _{34}\gamma _{41}
&=2\beta_3\beta_4\mu^6+(1+i)(2\beta_3\beta_4\phi_1(a)-(\beta_3+\beta_4))\mu^5\allowdisplaybreaks\\\label{eq12bndg31}&\quad +2i(\beta_3\beta_4\phi^2(a)-(\beta_3+\beta_4)\phi_1(a)+1)\mu^4+o(\mu^4).\allowdisplaybreaks
\end{align}
For the other two factors in \eqref{eq0bndg31} and \eqref{eq1bndg31} we have to consider the six different cases.
\\{\rm Case 1:} $ p_1=0$, $ p_2=1$. We have for this case
\begin{align}\label{Case1psi11} \gamma _{13}\gamma _{24}-\gamma _{23}\gamma _{14}&=
(1-i)\mu +o(\mu)
,\\ \label{Case1psi41} \gamma _{12}\gamma _{23}-\gamma _{22}\gamma _{13}&=
-(1+i)\mu+o(\mu).
\end{align} Therefore
\begin{align}\nonumber \psi_1(\mu)&=2(1-i)\beta_3\beta_4\mu^7- i(\beta_3\beta_4G(a)+2(\beta_3+\beta_4))\mu^6\\
&\quad-\dfrac18(1+i)(G^2(a)-4\beta_3\beta_4g(0)+4(\beta_3+\beta_4)G(a)+16)\mu^5\nonumber\\&+o(\mu^5),\label{psi1case2}
\\\nonumber \psi_4(\mu)&=-2(1+i)\beta_3\beta_4\mu^7-i(\beta_3\beta_4G(a)-2(\beta_3+\beta_4))\mu^6\\&\quad+\dfrac18(1-i)(\beta_3\beta_4G^2(a)-4\beta_3\beta_4g(0)-4(\beta_3+\beta_4)G(a)-16)\mu^5\nonumber\\&\quad+o(\mu^5).\label{psi4case2}\end{align}
{\rm Case 2:} $p_1=0$, $ p_2=2$. Here we get
\begin{align}\label{Case2psi11} \gamma _{13}\gamma _{24}-\gamma _{23}\gamma _{14}&=-2\mu^2+o(\mu^2)
,\\\label{Case2psi41}\gamma _{12}\gamma _{23}-\gamma _{22}\gamma _{13}&=2\mu^2 +o(\mu^2).
\end{align}
Thus 
\begin{align}\nonumber \psi_1(\mu)&=
-4\beta_3\beta_4\mu^8-(1-i)(\beta_3\beta_4G(a)+2(\beta_3+\beta_4))\mu^7
\\&\quad+\dfrac14i(\beta_3\beta_4G^2(a)+(\beta_3+\beta_4)G(a)+4)\mu^6 +o(\mu^6),\label{psi1case1}
\\\nonumber \psi_4(\mu)&=
4\beta_3\beta_4\mu^8+(1+i)(\beta_3\beta_4G(a)-2(\beta_3+\beta_4))\mu^7
\\&\quad+\dfrac14i(\beta_3\beta_4G^2(a)+4(\beta_3-\beta_4)G(a)+16)\mu^6+o(\mu^6).\label{psi4case1}
\end{align}
{\rm Case 3:} $p_1=0$, $ p_2=3$. We obtain
\begin{align}\label{Case3psi11} \gamma _{13}\gamma _{24}-\gamma _{23}\gamma _{14}&=
(1+i)\mu^3+o(\mu^3)
,\\ \label{Case3psi41}\gamma _{12}\gamma _{23}-\gamma _{22}\gamma _{13}&=-(1-i)\mu^3 +o(\mu^3).
\end{align} Hence  
\begin{align}\nonumber \psi_1(\mu)&= 2(1+i)\beta_3\beta_4\mu^9+(\beta_3\beta_4G(a)+2(\beta_3+\beta_4))\mu^8\allowdisplaybreaks\\
&\quad +\dfrac18(1-i)(\beta_3\beta_4G^2(a)-4\beta_3\beta_4g(0)+4(\beta_3+\beta_4)G(a)+16)\mu^7\allowdisplaybreaks\nonumber\\&\quad+o(\mu^7),\label{psi1case3}\\
\nonumber\psi_4(\mu)&=-2(1-i)\beta_3\beta_4\mu^9-(\beta_3\beta_4G(a)-2(\beta_3+\beta_4))\mu^8\allowdisplaybreaks \\
&\quad-\dfrac18(1+i)(\beta_3\beta_4G^2(a)-4\beta_3\beta_4g(0)-4(\beta_3+\beta_4)G(a)+16)\mu^7\allowdisplaybreaks\nonumber \\&\quad+o(\mu^7).\allowdisplaybreaks \label{psi4case3}\end{align}
{\rm Case 4:} $ p_1=1$, $ p_2=2$. Here we have
\begin{align} \label{Case4psi11}\gamma _{13}\gamma _{24}-\gamma _{23}\gamma _{14}&=
(1+i)\mu^3+o(\mu^3),
 \\ \label{Case4psi41}\gamma _{12}\gamma _{23}-\gamma _{22}\gamma _{13}&=
-(1-i)\mu^3 +o(\mu^3).
\end{align} Thus  
\begin{align}\nonumber \psi_1(\mu)&=2(1+i)\beta_3\beta_4\mu^9+(\beta_3\beta_4G(a)+2(\beta_3+\beta_4))\mu^8 \\ &\quad+\dfrac18(1-i)(\beta_3\beta_4G^2(a)-4\beta_3\beta_4g(0)+4(\beta_3+\beta_4)G(a)+16)\mu^7\nonumber\\&\quad+o(\mu^7). \label{psi1case4}\\
\nonumber\psi_4(\mu)&=-2(1-i)\beta_3\beta_4\mu^9-(\beta_3\beta_4G(a)-2(\beta_3+\beta_4))\mu^8 \\&\quad-\dfrac18(1+i)(\beta_3\beta_4G^2(a)-4\beta_3\beta_4g(0)-4(\beta_3+\beta_4)G(a)+16)\mu^7\nonumber \\&\quad+o(\mu^7).\label{psi4case4}\end{align}
{\rm Case 5:} $ p_1=1$, $ p_2=3$. We get
\begin{align} \label{Case5psi11}\gamma _{13}\gamma _{24}-\gamma _{23}\gamma _{14}&=
-2i\mu^4+o(\mu^4),
 \\ \label{Case5psi41}\gamma _{12}\gamma _{23}-\gamma _{22}\gamma _{13}&=
-2i\mu^4+o(\mu^4).
\end{align} Therefore
\begin{align}\nonumber \psi_1(\mu)&= -4i\beta_3\beta_4\mu^{10}-(1+i)(\beta_3\beta_4G(a)+2(\beta_3+\beta_4))\mu^9\\
&\quad -\dfrac14(\beta_3\beta_4G^2(a)+4(\beta_3+\beta_4)G(a)+16)\mu^8+o(\mu^{8}),\label{psi1case5}\\
\nonumber\psi_4(\mu)&=-4i\beta_3\beta_4\mu^{10}+(1-i)(\beta_3\beta_4G(a)-2(\beta_3+\beta_4))\mu^9 \\
&\quad+\dfrac14(\beta_3\beta_4G^2(a)-4(\beta_3+\beta_4)G(a)+16)\mu^8+o(\mu^8). \label{psi4case5}\end{align}
{\rm Case 6:} $ p_1=2$, $ p_2=3$. We obtain
\begin{align} \label{Case6psi11}\gamma _{13}\gamma _{24}-\gamma _{23}\gamma _{14}&=
-(1-i)\mu^5 +o(\mu^5),
\allowdisplaybreaks \\ \gamma _{12}\gamma _{23}-\gamma _{22}\gamma _{13}&=
(1+i)\mu^5 +o(\mu^5). \label{Case6psi41}
\end{align} Hence  
\begin{align}\nonumber \psi_1(\mu)&= -2(1-i)\beta_3\beta_4\mu^{11}+i(\beta_3\beta_4G(a)+2(\beta_3+\beta_4))\mu^{10}\allowdisplaybreaks\\
&\quad +\dfrac18(1+i)(\beta_3\beta_4G^2(a)+12\beta_3\beta_4g(0)+4(\beta_3+\beta_4)G(a)+16)\mu^9\nonumber\\&\quad+o(\mu^9),\label{psi1case6}\allowdisplaybreaks\\
\nonumber\psi_4(\mu)&=2(1+i)\beta_3\beta_4\mu^{11}+i(\beta_3\beta_4G(a)-2(\beta_3+\beta_4))\mu^{10}\allowdisplaybreaks \\
&\quad-\dfrac18(1-i)(\beta_3\beta_4G^2(a)+12\beta_3\beta_4g(0)-4(\beta_3+\beta_4)G(a)+16)\mu^9\allowdisplaybreaks\nonumber\\&\quad+o(\mu^9). \label{psi4case6}\end{align}
We already know  by Proposition \ref{Class1Prop} that the zeros $\mu_k$ of $D$ satisfy the asymptotics $\mu_k=k\frac \pi a+\tau_0+o(1)$ as $k\to \infty $. In order to improve on these asymptotics, write
\begin{equation}\label{eq2bndg31}
\mu_k=k\frac{\pi}{a}+\tau(k),~\tau(k)=\sum\limits_{m=0}^n\tau_{m}k^{-m}+o(k^{-n}), \quad k=1,2,\dots.
\end{equation}
Because of the  symmetry of the eigenvalues, we will only need to find the  asymptotics as $k\to \infty $.
We know $\tau _0$ from Proposition  \ref{Class1Prop},  and our aim is to find $\tau _1$ and $\tau _2$.
To this end we will substitute  \eqref{eq2bndg31} into $D_1(\mu_k)=0$ and we will then compare the coefficients of $k^0$, $k^{-1}$ and $k^{-2}$.

Observe that
\begin{align}\label{eq9bndg31}e^{-2i\mu_k a}&=e^{-2i\tau(k) a}=e^{-2i\tau_{0}a}\exp\left(-2i a\left(\frac{\tau_{1}}{k} +
\frac{\tau_{2}} {k^2}+o(k^{-2})\right)\right)\nonumber\\
&=e^{-2i\tau_{0}a}\left( 1-2ia\tau_{1}\frac1{k}-\left(2a^2\tau _1^2+2ia\tau _2 \right) \frac1{k^2}+o(k^{-2}) \right) , \end{align}
while
\begin{align}\label{eq8bndg31}\frac{1}{\mu_k}=\frac a{\pi k}\left(1+\frac{a\tau(k)}{k\pi}\right)^{-1}=\frac{a}{k\pi}-\frac{a^2\tau _0}{k^2\pi^2}+o(k^{-2}). \end{align}
We know that $D_1(\mu_k)=0$ can be written as
\begin{align}\label{eq9*bndg31}
\mu_k^{-\gamma }\psi_1(\mu_k)+\mu_k^{-\gamma }\psi_4(\mu_k)e^{-2i\tau_k a}=0,\end{align}
where $\gamma $ is the highest $\mu $-power in $\psi _1(\mu)$ and $\psi _4(\mu)$.
Substituting  \eqref{eq9bndg31} and \eqref{eq8bndg31} into   \eqref{eq9*bndg31}  and comparing the coefficients of $k^0$, $k^{-1}$ and $k^{-2}$ we get
\begin{thm}\label{asym41*thm}
For $g\in C^{1}[0,a]$, there exists a positive integer $k_0$ such that  the eigenvalues ${\lambda}_k$,
$k\in\Z$ of the problem \eqref{eq:2inteq0}, \eqref{case1}, where $B_1(y)=y^{[p_1]}(0)$, $B_2(y)=y^{[p_2]}(0)$,
$B_3y=y'(a)+i\beta_3\lambda y(a)$ and $B_4y=y^{[3]}(a)+i\beta_4\lambda y''(a)$ 
are ${\lambda}_{-k}=-\overline{{\lambda}_k}$,  
${\lambda}_k={\mu}_k^2$  for $k\ge k_0$ and the ${\mu}_k$  have the asymptotics
\[\mu_k=k\frac\pi a+\tau _0+\frac{\tau _1}k+\frac{\tau_2}{k^2}+o(k^{-2})\]
and the numbers $\tau _0$, $\tau _1$, $\tau _2$ are as follows:\\
{\rm Case 1:} $ p_1=0$,    $ p_2=1$,
\begin{multline*}\tau_0=-\frac{3\pi}{4a}, \ \tau _1=\frac14\,\frac{G(a)}{\pi }+\frac12\, \frac i{\pi}\,\left(\frac1{\beta_3}+\frac1{\beta_4}\right),\\
\quad\tau _2=\frac 3{16}\,\frac{G(a)}{\pi}-\frac14\,\frac{g(0)}{\pi^2}
 -\frac 14\,\frac a{\pi^2}\left(\frac1{\beta_3^2}+\frac1{\beta_4^2}-\frac2{\beta_3\beta_4}\right)+\frac38\, \frac{i}{\pi}\,\left(\frac1{\beta_3}+\frac1{\beta_4}\right).
\hfill\end{multline*}
{\rm Case 2:} $p_1=0$, $  p_2=2$,
\begin{multline*}\tau_0=-\frac{\pi}{a}, \ \tau _1=  \frac14 \,\frac{G(a)}\pi+\frac12\,\frac i{\pi}\, \left(\frac1{\beta_3}+\frac1{\beta_4}\right), \\
 \quad \tau _2=\frac 14\,\frac{G(a)}\pi-\frac14\,\frac a{\pi^2}\left(\frac 1{\beta_3^2}+\frac 1{\beta_4^2}+\frac 2{\beta_3\beta_4}\right)+\frac12\,\frac i{\pi}\,\left(\frac1{\beta_3}+\frac1{\beta_4}\right).\hfill\end{multline*}
{\rm Case 3:} $p_1=0$, $ p_2=3$,
\begin{multline*}\tau _0= -\frac{5\pi}{4a}, \ \tau _1= \frac14 \,\frac{G(a)}\pi+\frac12\,\frac i{\pi}\, \left(\frac1{\beta_3}+\frac1{\beta_4}\right), \\
\quad\tau _2=\frac 5{16}\,\frac{G(a)}\pi-\frac 14\,\frac{ag(0)}{\pi^2}-\frac14\,\frac a{\pi^2}\left(\frac1{\beta_3^2}+\frac 1{\beta_4^2}-\frac 2{\beta_3\beta_4}\right)+\frac 58\,\frac i\pi\,\left(\frac 1{\beta_3}+\frac 1{\beta_4}\right).
\hfill\end{multline*}
{\rm Case 4:} $ p_1=1$, $p_2=2$,
\begin{multline*}\tau _0= -\frac{5\pi}{4a}, \ \tau _1= \frac14 \,\frac{G(a)}\pi+\frac12\,\frac i{\pi}\, \left(\frac1{\beta_3}+\frac1{\beta_4}\right),\\
\quad\tau _2=\frac 5{16}\,\frac{G(a)}\pi-\frac 14\,\frac{ag(0)}{\pi^2}-\frac14\,\frac a{\pi^2}\left(\frac1{\beta_3^2}+\frac 1{\beta_4^2}-\frac 2{\beta_3\beta_4}\right)+\frac 58\,\frac i\pi\,\left(\frac 1{\beta_3}+\frac 1{\beta_4}\right).\hfill\end{multline*}
{\rm Case 5:} $ p_1=1$, $p_2=3$, 
\begin{multline*}\tau _0= -\frac{\pi}{2a}, \ \tau _1= \frac14 \,\frac{G(a)}\pi+\frac i{2\pi}\, \left(\frac1{\beta_3}+\frac1{\beta_4}\right),\\
\quad\tau _2=\frac 1{8}\,\frac{G(a)}\pi +\frac 14\,\frac i\pi\,\left(\frac 1{\beta_3}+\frac 1{\beta_4}\right)-\frac14\,\frac a{\pi^2}\left(\frac1{\beta_3^2}+\frac 1{\beta_4^2}-\frac 2{\beta_3\beta_4}\right).\hfill\end{multline*}
{\rm Case 6:} $ p_1=2$, $ p_2=3$,
\begin{multline*} \tau _0= -\frac{7\pi}{4a}, \ \tau _1= \frac 14 \,\frac{G(a)}\pi+\frac12\,\frac i{ \pi}\, \left(\frac1{\beta_3}+\frac1{\beta_4}\right),\\
 \quad\tau _2= \frac 7{16}\,\frac{G(a)}{\pi}+\frac {3}{4}\,\frac{ag(0)}\pi+\frac78\,\frac i\pi\,\left(\frac1{\beta_3}+\frac1{\beta_4}\right)-\frac14\,\frac a{\pi^2}\left(\frac1{\beta_3^2}+\frac1{\beta_4^2}-\frac 2{\beta_3\beta_4}\right).\hfill\end{multline*}
 In particular, there is an even  number of the pure imaginary eigenvalues  in each case.
\end{thm}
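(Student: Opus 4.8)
The plan is to carry out the substitution that the preceding computations have prepared. By Proposition~\ref{Class1Prop} we already know that the eigenvalues are $\lambda_k=\mu_k^2$ with $\mu_k=k\pi/a+\tau_0+o(1)$ as $k\to\infty$, that $\lambda_{-k}=-\overline{\lambda_k}$, and that the number of purely imaginary eigenvalues is even. Because of these symmetries, and because $D$ is even, it is enough to refine the asymptotics of $\mu_k$ for large positive $k$, which is why one may work in the sector $\arg\mu\in[-\tfrac{3\pi}{8},\tfrac{\pi}{8}]$ where the characteristic equation reduces to $D_1(\mu)=\psi_1(\mu)+\psi_4(\mu)e^{-2i\mu a}=0$, see \eqref{eq8*bndg11}. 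Fixing one of the six cases, let $\gamma$ denote the common leading $\mu$-power of $\psi_1$ and $\psi_4$ (equal to $7,8,9,9,10,11$ in Cases~1--6), divide $D_1(\mu_k)=0$ by $\mu_k^{\gamma}$ to obtain \eqref{eq9*bndg31}, and insert the ansatz $\mu_k=k\pi/a+\sum_{m\ge0}\tau_m k^{-m}+o(k^{-n})$ from \eqref{eq2bndg31}. Using \eqref{eq8bndg31} to expand the powers $\mu_k^{-1},\mu_k^{-2},\dots$ into series in $k^{-1}$ and \eqref{eq9bndg31} to expand $e^{-2i\mu_k a}$, the left-hand side of \eqref{eq9*bndg31} becomes a power series in $k^{-1}$ whose coefficients must all vanish.

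Matching the coefficient of $k^0$ yields $e^{-2i\tau_0 a}$ equal to minus the ratio of the leading coefficients of $\psi_1$ and $\psi_4$, read off from the case-by-case formulas above (for instance $2(1-i)\beta_3\beta_4$ against $-2(1+i)\beta_3\beta_4$ in Case~1, giving $e^{-2i\tau_0 a}=-i$); this fixes $\tau_0$ modulo $\pi/a$, and the branch compatible with Proposition~\ref{Class1Prop} is the one recorded. Matching the coefficient of $k^{-1}$ then gives one linear equation for $\tau_1$, whose solution carries a $G(a)$-term inherited from $\varphi_1=\tfrac14 G$ in the fundamental system \eqref{eq36can*} (hence in the entries $\gamma_{j,k}$), together with a term in $i(\beta_3^{-1}+\beta_4^{-1})$ that appears because dividing the subleading coefficients in \eqref{eq02bndg31}, \eqref{eq12bndg31} by the leading coefficient $2\beta_3\beta_4$ turns $\beta_3+\beta_4$ into $\beta_3^{-1}+\beta_4^{-1}$. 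Matching the coefficient of $k^{-2}$, now with $\tau_0$ and $\tau_1$ known, gives a linear equation for $\tau_2$ and the stated expression: the $g(0)$-term enters through $\varphi_2=\tfrac1{32}G^2-\tfrac18 g$ and, when $p_2=3$, through the additional $-g(0)\delta_{k,1}(0,\mu)$ piece of $\gamma_{2,k}$, while the combination $\beta_3^{-2}+\beta_4^{-2}-2(\beta_3\beta_4)^{-1}$ is produced by the square of the $k^{-1}$-contribution together with the $\mu^{\gamma-2}$ coefficients of \eqref{eq02bndg31}, \eqref{eq12bndg31}. Since only the two case-dependent factors $\gamma_{13}\gamma_{24}-\gamma_{23}\gamma_{14}$ and $\gamma_{12}\gamma_{23}-\gamma_{22}\gamma_{13}$ and the power $\gamma$ change between cases, the identical scheme is run six times.

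The main obstacle is bookkeeping rather than ideas: to reach $\tau_2$ one needs $\psi_1$ and $\psi_4$ correctly to three successive $\mu$-orders, which forces \eqref{eq36can*} to be used up to $\mu^{-2}$ inside every $\gamma_{j,k}$ and then the $2\times2$ determinants \eqref{eq0bndg31}, \eqref{eq1bndg31} to be expanded so that the $\mu^{\gamma-2}$ coefficients, which ultimately feed $\tau_2$, come out exactly. A specific point of care is that for $p_2=3$ the second boundary row picks up the extra term $-g(0)\delta_{k,1}(0,\mu)$; this is exactly what makes the $g(0)$-coefficient of $\tau_2$ in Cases~3, 4 and 6 differ (including a sign change in Case~6) from its vanishing value in Cases~1, 2 and 5. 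The final assertion, that the number of purely imaginary eigenvalues is even in each case, needs no new argument, being inherited verbatim from Proposition~\ref{Class1Prop}.
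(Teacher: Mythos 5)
Your proposal follows essentially the same route as the paper: reduce the characteristic equation in the sector $\arg\mu\in[-\tfrac{3\pi}{8},\tfrac{\pi}{8}]$ to $D_1(\mu)=\psi_1(\mu)+\psi_4(\mu)e^{-2i\mu a}=0$, compute $\psi_1,\psi_4$ to three successive orders from the asymptotic fundamental system, insert the ansatz \eqref{eq2bndg31}, and compare coefficients of $k^{0}$, $k^{-1}$, $k^{-2}$ using \eqref{eq8bndg31} and \eqref{eq9bndg31}, the six cases differing only in the left-endpoint factors and the power $\gamma$. One small bookkeeping slip: the $g(0)$-coefficient of $\tau_2$ does not vanish in Case~1 (the theorem records $-\tfrac14\,g(0)/\pi^2$ there, arising from $\varphi_2(0)=-\tfrac18 g(0)$ rather than from the $p_2=3$ correction), but this does not affect the validity of your method.
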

Next we provide  the first four terms of the eigenvalue asymptotics of the problems \eqref{eq:2inteq0},  \eqref{case1} of   Case$^{(a)}$ 2.

The characteristic function of \eqref{eq:2inteq0}, \eqref{case1} for the problems of Case$^{(a)}$ 2  is
$$D(\mu)=\det(\gamma_{j,k}\exp(\varepsilon_{j,k}))_{j,k=1}^4,$$ where
\begin{align*}
&\varepsilon_{1,k}=\varepsilon_{2,k}=0, ~ \varepsilon_{3,k}=\varepsilon_{4,k}=i^{k-1}\mu a,~\gamma_{1,k}=\delta_{k,p_1}(0,\mu),~ 
\\&\gamma_{2,k}=\delta_{k,2}(0,\mu) \text{ if } p_2\le2,
~\gamma_{2,k}=\delta_{k,3}(0,\mu)-g(0)\delta_{k,1}(0,\mu) \text{ if } p_2=3,\\
&\gamma_{3,k}=\delta_{k,2}(a,\mu)+i\beta_3\mu^2\delta_{k,1}(a,\mu),\\
& \gamma_{4,k}=\delta_{k,3}(a,\mu)-g(a)\delta_{k,1}(a,\mu)+i\beta_4\mu^2\delta_{k,0}(a,\mu).\end{align*}
Note that for the calculations of the functions $\psi_1$ and $\psi_4$ respectively defined in   \eqref{eq0bndg31}  and \eqref{eq1bndg31} only the factors $\gamma _{31}\gamma _{42}-\gamma _{32}\gamma _{41}$ and $\gamma _{31}\gamma _{44}-\gamma _{34}\gamma _{41}$  respectively  given in \eqref{eq02bndg31} and \eqref{eq12bndg31} will change. Hence we are going to provide these two terms. A straightforward calculation gives
\begin{align}\nonumber\gamma _{31}\gamma _{42}-\gamma _{32}\gamma _{41}
&=2\beta_3\mu^6+\frac12(1-i)(\beta_3G(a)-2\beta_3\beta_4+2)\mu^5-\frac18i(\beta_3 G^2(a)\\\label{Cl3eq02bndg31}&\quad 
 +4(1-\beta_3\beta_4)G(a)-16\beta_4)\mu^4+o(\mu^4),\\ \nonumber
\gamma _{31}\gamma _{44}-\gamma _{34}\gamma _{41}
&=-2\beta_3 \mu^6-\frac12(1+i)(\beta_3 G(a)+2\beta_3\beta_4-2)\mu^5-\frac18i(\beta_3 G^2(a)\\\label{Cl3eq12bndg31}&\quad -4\beta_3(1-\beta_4)G(a)-16)\mu^4+o(\mu^4).
\end{align}
Using the same method as for Case$^{(a)}$ 1, we get 
\\{\rm Case 1:} $ p_1=0$,  $ p_2=1$.  It follows from \eqref{eq0bndg31}, \eqref{Case1psi11}, \eqref{Cl3eq02bndg31} on one hand and from  \eqref{eq1bndg31}, \eqref{Case1psi41}, \eqref{Cl3eq12bndg31}  on the other hand that
\begin{align}\nonumber \psi_1(\mu)&=2(1-i)\beta_3 \mu^7- i(\beta_3 G(a)-2\beta_3\beta_4+2)\mu^6-\dfrac18(1+i)(\beta_3G^2(a)\\
&\quad+4(1-\beta_3\beta_4)G(a)-4\beta_3g(0)-16)\mu^5+o(\mu^{5}),\label{Cl3psi1case1}
\\\nonumber \psi_4(\mu)&=2(1+i)\beta_3\mu^7+i(\beta_3 G(a)+2\beta_3 \beta_4-2)\mu^6-\dfrac18(1-i)(\beta_3 G^2(a)\\&\quad-4(1-\beta_3\beta_4)G(a)-4\beta_3g(0)-16\beta_4)\mu^5+o(\mu^5).\label{Cl3psi4case1}
\end{align}
{\rm Case 2:} $p_1=0$, $ p_2=2$. Using \eqref{eq0bndg31}, \eqref{Case2psi11}, \eqref{Cl3eq02bndg31} and  \eqref{eq1bndg31}, \eqref{Case2psi41}, \eqref{Cl3eq12bndg31}, we have 
\begin{align}\nonumber \psi_1(\mu)&=-4\beta_3\mu^8 +(1-i)(\beta_3 G(a)-2\beta_3 \beta_4-2)\mu^7\allowdisplaybreaks\\&\quad-\dfrac14i(\beta_3 G^2(a)+4(1-\beta_3\beta_4)G(a) -16\beta_4)\mu^6 +o(\mu^6),\label{Cl3psi1case2}\allowdisplaybreaks
\\\nonumber \psi_4(\mu)&=-4\beta_3\mu^8 -(1+i)(\beta_3 G(a)+2\beta_3 \beta_4-2)\mu^7\allowdisplaybreaks\\&\quad-\dfrac14i(\beta_3 G^2(a)-4(1-\beta_3\beta_4)G(a) -16\beta_4)\mu^6+o(\mu^6).\label{Cl3psi4case2}\allowdisplaybreaks
\end{align}
{\rm Case 3:} $p_1=0$, $p_2=3$. Putting respectively \eqref{eq0bndg31}, \eqref{Case3psi11}, \eqref{Cl3eq02bndg31} and  \eqref{eq1bndg31}, \eqref{Case3psi41}, \eqref{Cl3eq12bndg31},  together gives   
\begin{align}\nonumber \psi_1(\mu)&=2(1+i)\beta_3\mu^9+ (\beta_3 G(a)-2\beta_3 \beta_4+2)\mu^8+\dfrac18(1-i)(\beta_3 G^2(a)\allowdisplaybreaks\\&\quad+4(1-\beta_3\beta_4)G(a)-4\beta_3g(0) -16\beta_4)\mu^7+o(\mu^7),\label{Cl3psi1case3}\allowdisplaybreaks
\\\nonumber \psi_4(\mu)&=2(1-i)\beta_3\mu^9 +(\beta_3 G(a)+2\beta_3 \beta_4-2)\mu^8+\dfrac18(1+i)(\beta_3 G^2(a)\allowdisplaybreaks\\&\quad-4(1-\beta_3\beta_4)G(a)-\beta_3g(0) -16\beta_4)\mu^7+o(\mu^7).\label{Cl3psi4case3}\allowdisplaybreaks
\end{align}
{\rm Case 4:} $ p_1=1$, $ p_2=2$. The equations  \eqref{eq0bndg31}, \eqref{Case4psi11}, \eqref{Cl3eq02bndg31} and  \eqref{eq1bndg31}, \eqref{Case4psi41}, \eqref{Cl3eq12bndg31},  respectively  yield   
\begin{align}\nonumber \psi_1(\mu)&=2(1+i)\beta_3\mu^9 + (\beta_3 G(a)-2\beta_3 \beta_4+2)\mu^8+\dfrac18(1-i)(\beta_3 G^2(a)\allowdisplaybreaks\\&\quad+4(1-\beta_3\beta_4)G(a)-4\beta_3g(0) -16\beta_4)\mu^7+o(\mu^7),\label{Cl3psi1case4}\allowdisplaybreaks
\\\nonumber \psi_4(\mu)&=2(1-i)\beta_3\mu^9 +(\beta_3 G(a)+2\beta_3 \beta_4-2)\mu^8+\dfrac18(1+i)(\beta_3 G^2(a)\allowdisplaybreaks\\&\quad-4(1-\beta_3\beta_4)G(a)-\beta_3g(0) -16\beta_4)\mu^7+o(\mu^7).\label{Cl3psi4case4}\allowdisplaybreaks
\end{align}
{\rm Case 5:} $ p_1=1$, $ p_2=3$. It follows from  \eqref{eq0bndg31}, \eqref{Case5psi11} and  \eqref{Cl3eq02bndg31}  on one hand and from  \eqref{eq1bndg31}, \eqref{Case5psi41} and from  \eqref{Cl3eq12bndg31} on the other hand that
 \begin{align}\nonumber \psi_1(\mu)&= -4i\beta_3 \mu^{10}+(1+i)(2\beta_3\beta_4G(a)-\beta_3G(a)-2)\mu^{9}\\
&\quad -\dfrac14(\beta_3G^2(a)+4(1-\beta_3\beta_4)G(a)-16\beta_4)\mu^8+o(\mu^8),\label{Cl3psi1case5}\\
\nonumber\psi_4(\mu)&=4i\beta_3\mu^{10}-(1-i)(2\beta_3\beta_4+ \beta_3G(a)-2)\mu^{9} \\
&\quad-\dfrac14(\beta_3 G^2(a)-4(1-\beta_3\beta_4)G(a)-16\beta_4)\mu^8+o(\mu^8). \label{Cl3psi4case5}\end{align}
{\rm Case 6:} $ p_1=2$, $p_2=3$. Using respectively  \eqref{eq0bndg31}, \eqref{Case6psi11} and  \eqref{Cl3eq02bndg31}  on one hand and  \eqref{eq1bndg31}, \eqref{Case6psi41} and  \eqref{Cl3eq12bndg31} on the other hand, we get  
  \begin{align}\nonumber \psi_1(\mu)&= -2(1-i)\beta_3 \mu^{11}+ i( \beta_3 G(a)-2\beta_3\beta_4+2)\mu^{10} +\dfrac18(1+i)(\beta_3G^2(a)\\
&\quad+4(1-\beta_3\beta_4)G(a)+12\beta_3g(0)-16\beta_4)\mu^9+o(\mu^9),\label{Cl3psi1case6}\\
\nonumber\psi_4(\mu)&=-2(1+i)\beta_3 \mu^{11}- i( \beta_3 G(a)+2\beta_3\beta_4-2)\mu^{10} +\dfrac18(1-i)(\beta_3G^2(a)\\
&\quad-4(1-\beta_3\beta_4)G(a)+12\beta_3g(0)-16\beta_4)\mu^9+o(\mu^9). \label{Cl3psi4case6}\end{align}
Using \eqref{eq2bndg31}--\eqref{eq9*bndg31} and applying to Proposition \ref{Class3Prop} the same reasoning and calculations as  for  Proposition \ref{Class1Prop}, we get  
\begin{thm}\label{Cl3asym41*thm}
For $g\in C^{1}[0,a]$, there exists a positive integer $k_0$ such that  the eigenvalues ${\lambda}_k$,
$k\in\Z$   of the problem \eqref{eq:2inteq0}, \eqref{case1}, where $B_1(y)=y^{[p_1]}(0)$, $B_2(y)=y^{[p_2]}(0)$,
$B_3y=y''(a)+i\beta_3\lambda y'(a)$ and $B_4y=y^{[3]}(a)+i\beta_4\lambda y(a)$ 
are    ${\lambda}_{-k}=-\overline{{\lambda}_k}$, 
${\lambda}_k={\mu}_k^2$  for $k\ge k_0$ and the ${\mu}_k$  have the asymptotics
\[\mu_k=k\frac\pi a+\tau _0+\frac{\tau _1}k+\frac{\tau_2}{k^2}+o(k^{-2})\]
and the numbers $\tau _0$, $\tau _1$, $\tau _2$ are as follows:\\
{\rm Case 1:} $ p_1=0$, $ p_2=1$,
\begin{multline*}
\tau_0=-\frac{\pi}{4a}, \ \tau _1=\frac 14\,\frac{G(a)}{\pi }+\frac 1{2}\, \frac i\pi\,\frac{1-\beta_3\beta_4}{\beta_3},\\
\quad \tau _2= \frac1{16}\,\frac{ G (a)}{\pi }-\frac14 \,\frac{ag(0)}{\pi^2} -\frac14\, \frac{a(\beta_3^2\beta_4^2+2\beta_3\beta_4+1)}{\pi^2\beta_3^2}+\frac 1{8 }\, \frac i\pi \frac{1-\beta_3\beta_4}{\beta_3}.
\hfill\end{multline*}
{\rm Case 2:} $p_1=0$, $ p_2=2$,
\begin{multline*}\tau_0=-\frac{\pi}{2a}, \  \tau _1=\frac 14\,\frac{G(a)}{\pi }+\frac 1{2}\, \frac i\pi\frac{1-\beta_3\beta_4}{\beta_3}, \\
 \quad \tau _2=\frac 18\,\frac{G(a)}\pi-\frac14\, \frac{a(\beta_3^2\beta_4^2+2\beta_3\beta_4+1)}{\pi^2\beta_3^2}+\frac 14\, \frac i\pi\, \frac{1-\beta_3\beta_4}{\beta_3}.\hfill\end{multline*}
{\rm Case 3:} $p_1=0$, $ p_2=3$,
\begin{multline*}\tau _0= -\frac{5\pi}{4a}, \ \tau _1=\frac  1{2}\, \frac{\beta_3\beta_4-1}{\pi\beta_3}-\frac i4\,\frac{G(a)}{\pi },\\
\quad\tau _2=-\frac 1{16}\,\frac{aG^2(a)}{\pi^2}+\frac 14\,\frac{aG(a)}{\pi^2}\,\frac{\beta_4\beta_3- 1}{\beta_3} +\frac{5}8\,  \frac{\beta_4\beta_3-1}{\pi\beta_3}-\frac14\,\frac{a(\beta_3^2\beta_4^2-2\beta_3\beta_4+1)}{\pi^2\beta_3^2}\\\qquad\quad-\frac{5i}{16}\, \frac{G(a)}\pi.
\hfill\end{multline*}
{\rm Case 4:} $p_1=1$, $ p_2=2$,
\begin{multline*}\tau _0= -\frac{5\pi}{4a}, \ \tau _1=\frac  1{2}\, \frac{\beta_3\beta_4-1}{\pi\beta_3}-\frac i4\,\frac{G(a)}{\pi },\\
\quad\tau _2=-\frac 1{16}\,\frac{aG^2(a)}{\pi^2}+\frac 14\,\frac{aG(a)}{\pi^2}\,\frac{\beta_4\beta_3- 1}{\beta_3} +\frac{5}8\,  \frac{\beta_4\beta_3-1}{\pi\beta_3}-\frac14\,\frac{a(\beta_3^2\beta_4^2-2\beta_3\beta_4+1)}{\pi^2\beta_3^2}\\\qquad\quad-\frac{5i}{16}\, \frac{G(a)}\pi.
\hfill\end{multline*}
{\rm Case 5:} $ p_1=1$, $p_2=3$, 
\begin{multline*}\tau _0= -\frac{\pi}{a}, \ \tau _1=\frac 14\,\frac{G(a)}{\pi }+\frac 1{2}\, \frac i\pi\, \frac{1-\beta_3\beta_4}{\beta_3},\\
\quad \tau _2=\frac 14\,\frac{G(a)}\pi -\frac14\,\frac{a(\beta_3^2\beta_4^2+2\beta_3\beta_4+1)}{\pi^2\beta_3^2}- \frac{ 1}2\, \frac i\pi\, \frac{1-\beta_3\beta_4}{ \beta_3}.\hfill\end{multline*}
{\rm Case 6:} $ p_1=2$, $ p_2=3$,
\begin{multline*}\tau _0= -\frac{5\pi}{4a}, \ \tau _1=\frac 14\,\frac{G(a)}{\pi }+\frac 1{2}\, \frac i\pi\,  \frac{1-\beta_3\beta_4}{\pi\beta_3},\\
\quad \tau _2=\frac 5{16}\,\frac{G(a)}\pi+\frac 34\,\frac{ag(0)}{\pi^2} -\frac14\,\frac{a(\beta_3^2\beta_4^2+2\beta_3\beta_4+1)}{\pi^2\beta_3^2}+\frac{5 }8\, \frac i\pi\, \frac{1-\beta_3\beta_4}{\pi\beta_3}.
\hfill\end{multline*}
 In particular, there is an odd number of pure imaginary eigenvalues in each case.
\end{thm}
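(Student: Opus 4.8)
The plan is to follow verbatim the template used for Theorem~\ref{asym41*thm}, the only genuinely new ingredient being the two right-endpoint determinant factors $\gamma_{31}\gamma_{42}-\gamma_{32}\gamma_{41}$ and $\gamma_{31}\gamma_{44}-\gamma_{34}\gamma_{41}$ recorded in \eqref{Cl3eq02bndg31} and \eqref{Cl3eq12bndg31}, which replace \eqref{eq02bndg31} and \eqref{eq12bndg31}. Combining these with the six left-endpoint factors $\gamma_{13}\gamma_{24}-\gamma_{23}\gamma_{14}$ and $\gamma_{12}\gamma_{23}-\gamma_{22}\gamma_{13}$ --- which are the same as in Case$^{(a)}$~1, see \eqref{Case1psi11}--\eqref{Case6psi41} --- in the definitions \eqref{eq0bndg31}, \eqref{eq1bndg31} of $\psi_1$ and $\psi_4$ yields the three-term expansions \eqref{Cl3psi1case1}--\eqref{Cl3psi4case6}. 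First I would record that, by the Birkhoff regularity of the problems (noted after \eqref{allclasses}) and the sector argument leading to \eqref{eq8*bndg11}, in the sector $\arg\mu\in[-\tfrac{3\pi}{8},\tfrac\pi8]$ the equation $D(\mu)=0$ is equivalent to $D_1(\mu):=\psi_1(\mu)+\psi_4(\mu)e^{-2i\mu a}=0$, and that the coarse localization $\mu_k=k\tfrac\pi a+\tau_0+o(1)$, with $\tau_0$ as in Proposition~\ref{Class3Prop}, is already at hand.

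Next, for each of the six cases I would substitute the refined ansatz $\mu_k=k\tfrac\pi a+\tau(k)$ with $\tau(k)=\tau_0+\tau_1k^{-1}+\tau_2k^{-2}+o(k^{-2})$ into $D_1(\mu_k)=0$ written in the normalized form \eqref{eq9*bndg31}, using the expansion \eqref{eq9bndg31} of $e^{-2i\mu_k a}$ and the expansion \eqref{eq8bndg31} of $\mu_k^{-1}$ (hence of $\mu_k^{-\gamma}$, with $\gamma$ the common top $\mu$-degree of $\psi_1$ and $\psi_4$ in that case). This turns $D_1(\mu_k)=0$ into an identity $c_0+c_1k^{-1}+c_2k^{-2}+o(k^{-2})=0$ whose coefficients must vanish: $c_0=0$ recovers $\tau_0$ from Proposition~\ref{Class3Prop} (a consistency check that also fixes the branch of the exponential), $c_1=0$ is a linear equation for $\tau_1$, and, once $\tau_0$ and $\tau_1$ are known, $c_2=0$ is a linear equation for $\tau_2$; solving these gives the listed values. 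The symmetry $\lambda_{-k}=-\overline{\lambda_k}$, the relation $\lambda_k=\mu_k^2$, and the parity of the number of pure imaginary eigenvalues are then inherited unchanged from Proposition~\ref{Class3Prop}, exactly as in the passage from Proposition~\ref{Class1Prop} to Theorem~\ref{asym41*thm}.

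I expect the main obstacle to be purely computational: producing the expansions of $\psi_1$ and $\psi_4$ correctly through three powers of $\mu$, i.e.\ tracking the contributions of $G(a)$, $g(0)$ and $g(a)$ that enter through $\varphi_1=\tfrac14G$ and $\varphi_2=\tfrac1{32}G^2-\tfrac18g$ in \eqref{eq36can*} and through the entries $\gamma_{2,k}$ and $\gamma_{4,k}$, keeping the powers of $\beta_3$ and $\beta_4$ straight, and then resolving the $k^{-2}$-equation without sign slips; everything downstream of the $\psi$'s is mechanical. A subtlety specific to Case$^{(a)}$~2 is that $\gamma_{31}\gamma_{44}-\gamma_{34}\gamma_{41}$ now has leading term $-2\beta_3\mu^6$ rather than the $+2\beta_3\beta_4\mu^6$ of Case$^{(a)}$~1, so the leading coefficient of $\psi_4$ acquires an extra sign relative to the Case$^{(a)}$~1 pattern while that of $\psi_1$ does not; this accounts for the $\tau_0$ of Proposition~\ref{Class3Prop} differing from that of the corresponding case of Case$^{(a)}$~1 and, in Cases~3 and~4 where the ratio of the leading coefficients of $\psi_4$ and $\psi_1$ becomes $-i$ instead of $i$, for the interchange of the roles of $G(a)$ and of the $\beta$-dependent terms in $\tau_1$ (compare the form $\tfrac12\,\tfrac{\beta_3\beta_4-1}{\pi\beta_3}-\tfrac i4\,\tfrac{G(a)}\pi$ with that in the other four cases). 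One must therefore be attentive to which of $\psi_1$, $\psi_4$ governs the leading behaviour in the chosen sector and index the large eigenvalues accordingly, as was done after \eqref{eq9*bndg31}.
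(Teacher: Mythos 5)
Your proposal follows the paper's own route essentially verbatim: the paper likewise keeps the left-endpoint factors $\gamma_{13}\gamma_{24}-\gamma_{23}\gamma_{14}$ and $\gamma_{12}\gamma_{23}-\gamma_{22}\gamma_{13}$ from Case$^{(a)}$~1, replaces only the right-endpoint factors by \eqref{Cl3eq02bndg31}--\eqref{Cl3eq12bndg31} to obtain \eqref{Cl3psi1case1}--\eqref{Cl3psi4case6}, and then substitutes the ansatz \eqref{eq2bndg31} into \eqref{eq9*bndg31} via \eqref{eq9bndg31}--\eqref{eq8bndg31}, comparing coefficients of $k^{0}$, $k^{-1}$, $k^{-2}$ exactly as for Theorem~\ref{asym41*thm}, with the symmetry and parity statements inherited from Proposition~\ref{Class3Prop}. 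Your observation about the sign flip in the leading term of $\gamma_{31}\gamma_{44}-\gamma_{34}\gamma_{41}$ and its effect in Cases~3 and~4 is a correct and useful reading of the structure (though note that in those two cases the resulting $\tau_0=-\tfrac{5\pi}{4a}$ happens to coincide with its Case$^{(a)}$~1 counterpart).
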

Note that the functions $\psi_1$ and $\psi_4$ in Case  3  of  the problems of classes Case$^{(a)}$ 1 and Case$^{(a)}$ 2 are respectively equal to those of Case 4 of the same class. Hence the values of $\tau_k$, $k=0,1,2$ in Case 3 of each of the classes Case$^{(a)}$ 1 and Case$^{(a)}$ 2 are equal to those in Case 4 for the corresponding class.








\begin{rmk}\label{Finalrmk}\rm
In \cite{MolZin1} we have considered the differential equation \eqref{eq:2inteq0}  with the boundary terms $B_1(\lambda)y$ and $B_2(\lambda)y$ at $0$ as in this paper,   but only  the cases 1,2,5  and 6.

The boundary  terms $B_3(\lambda)y$ and $B_4(\lambda)y$ considered in Case$^{(a)}$ 1 of this paper differ from those of \cite{MolZin1}. However according to the values of $\tau_1$,  we can observe that  if $\beta_j>0$, $j=3,4$,  or if   $\beta_3\beta_4<0$    and $\beta_3+\beta_4\le 0$, then  the eigenvalues of the operator pencil $L(\lambda)$ lie on the closed upper half-plane satisfying \cite[Proposition 2.3]{MolZin1}.

The boundary terms  $B_3(\lambda)y$ and $B_4(\lambda)y$  considered  in Case$^{(a)}$ 2 of this paper are those of \cite{MolZin1} but  where $\beta_3>0$ and $\beta_4<0$.  
We can observe that all eigenvalues of $L(\lambda)$ lie in the closed upper half-plane   in cases 1, 2, 5 and 6 if $\beta_3>0$ and $\beta_3\beta_4<1$ or  if   $\beta_3<0$ and $\beta_3\beta_4>1$ . 
However  the eigenvalues in cases 3 and 4 will  lie  in the closed upper half-plane if $\beta_3>0$ and $\beta_3\beta_4>1$    or    $\beta_3<0$ and $\beta_3\beta_4<1$.

\end{rmk}

 \section{Asymptotics of eigenvalues of the problem describing the stability of a flexible missile}\label{flex}
In this section we consider the problem \eqref{eq:2inteq0}, \eqref{case1} where  $\beta_3=\beta_4=0$, $p_1=p_3=2$ and $p_2=p_4=3$.  It follows from \eqref{jDerivative},  \eqref{Dcases} and \eqref{Class3} that the characteristic function of the problem for $g=0$ is: 
\begin{equation}\label{flexChar}\phi(\mu)=2\mu^4[1-\cos(\mu a)\cosh(\mu a)].\end{equation}
Next we give the asymptotics distributions of the zeros of $\phi(\mu)$ with their proper counting.
\begin{lem}\label{fexphiLem}
For $g=0$ the function $\phi$ has a zero of multiplicity eight at $0$, exactly one simple zero in each interval  
$[ 2m\frac{\pi}a,(2m+\frac12)\frac{\pi}a]$ and  $[(2m+\frac32)\frac{\pi}a, (2m+2)\frac{\pi}a]$, respectively,  for  nonnegative integers $m$ with asymptotics
\begin{align*}\tilde{\mu}_k=(2k-5)\frac{\pi}{2a}+o(1),\quad k=3,4,\dots, \end{align*}simple zeros at $-\tilde{\mu}_k$, $\tilde{\mu}_{-k}=i\tilde{\mu}_k$, -$i\tilde{\mu}_k$, for $k=3,4,\dots$, and no other zeros. \end{lem}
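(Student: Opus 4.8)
The plan is to isolate the high-order zero at the origin by a Taylor expansion, then to show that every remaining zero lies on the real or on the imaginary axis, and finally to pin down the real zeros directly, the imaginary ones following by symmetry.

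For the origin, $\cos z\cosh z=1-\tfrac{z^{4}}{6}+O(z^{8})$ gives $1-\cos(\mu a)\cosh(\mu a)=\tfrac{a^{4}}{6}\mu^{4}\bigl(1+O(\mu^{4})\bigr)$, hence $\phi(\mu)=\tfrac{a^{4}}{3}\mu^{8}\bigl(1+O(\mu^{4})\bigr)$, so $0$ is a zero of multiplicity exactly $8$, and for $\mu\ne0$ we have $\phi(\mu)=0$ if and only if $\cos(\mu a)\cosh(\mu a)=1$. To show these zeros lie on the axes I would use $\cos(\mu a)\cosh(\mu a)=\tfrac12[\cosh((1+i)\mu a)+\cosh((1-i)\mu a)]$ together with $\cosh u-1=2\sinh^{2}(u/2)$, which yields
\[
1-\cos(\mu a)\cosh(\mu a)=-\sinh^{2}\!\Bigl(\tfrac{(1+i)\mu a}{2}\Bigr)-\sinh^{2}\!\Bigl(\tfrac{(1-i)\mu a}{2}\Bigr).
\]
Thus a nonzero zero satisfies $|\sinh\tfrac{(1+i)\mu a}{2}|=|\sinh\tfrac{(1-i)\mu a}{2}|$; writing $(1+i)\mu a=x+iy$ with $x,y\in\mathbb{R}$ (so that $(1-i)\mu a=y-ix$) and squaring, this becomes $\sinh^{2}\tfrac{x}{2}-\sin^{2}\tfrac{x}{2}=\sinh^{2}\tfrac{y}{2}-\sin^{2}\tfrac{y}{2}$. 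Since $t\mapsto\sinh^{2}t-\sin^{2}t$ is even and strictly increasing on $[0,\infty)$ (its derivative is $\sinh 2t-\sin 2t>0$ for $t>0$), this forces $|x|=|y|$, and then $\mu a=\tfrac{x+iy}{1+i}$ is real or purely imaginary.

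It remains to locate the real zeros; the imaginary ones then follow because $\phi$ is even and $\phi(i\mu)=\phi(\mu)$ (as $\cos(i\mu a)\cosh(i\mu a)=\cos(\mu a)\cosh(\mu a)$). On the intervals where $\cos(\mu a)\le0$ one has $\cos(\mu a)\cosh(\mu a)\le0<1$, so no zeros. On $\bigl[\tfrac{2m\pi}{a},\tfrac{(2m+1/2)\pi}{a}\bigr]$ I would study $f(\mu)=\cos(\mu a)\cosh(\mu a)$, whose derivative is $a\cosh(\mu a)\bigl(\cos(\mu a)\tanh(\mu a)-\sin(\mu a)\bigr)$; the bracket is strictly decreasing there (its derivative equals $-a\tanh(\mu a)[\sin(\mu a)+\cos(\mu a)\tanh(\mu a)]$, which is negative), running from $\tanh(2m\pi)\ge0$ down to $-1$, so $f$ first increases and then decreases, from $\cosh(2m\pi)$ to $0$; hence for $m\ge1$ the equation $f=1$ has exactly one root here, simple because $f'\ne0$ at it (while for $m=0$, $f$ decreases from $1$ and there is no zero other than the origin). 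On $\bigl[\tfrac{(2m+3/2)\pi}{a},\tfrac{(2m+2)\pi}{a}\bigr]$ the same bracket is nonnegative, so $f$ is strictly increasing from $0$ to $\cosh((2m+2)\pi)>1$ and there is again exactly one simple root. At each such root $\cos(\mu a)=1/\cosh(\mu a)\to0$ as $\mu\to\infty$, so $\mu a$ tends to an odd multiple of $\tfrac{\pi}{2}$; listing the positive real zeros in increasing order and labelling them consistently with the eightfold zero at the origin gives the stated asymptotics $\tilde\mu_{k}=(2k-5)\tfrac{\pi}{2a}+o(1)$. Together with the symmetry this accounts for $-\tilde\mu_{k}$ and $\pm i\tilde\mu_{k}$, and by the axis dichotomy above $\phi$ has no further zeros.

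The main obstacle is the real-axis analysis: one must check the monotonicity of the auxiliary function $\cos(\mu a)\tanh(\mu a)-\sin(\mu a)$ carefully on the two families of subintervals in order to turn the intermediate-value existence into uniqueness and simplicity, and one must be careful with the labelling of the zeros near $0$ so that the linear term $(2k-5)\tfrac{\pi}{2a}$ comes out consistently with the multiplicity-eight zero at the origin — equivalently, with the fact (verifiable by Rouch\'e's theorem on the squares with vertices $\pm\tfrac{N\pi}{a}(1\pm i)$, where $|\cos(\mu a)\cosh(\mu a)|\ge\sinh(N\pi)$) that $\phi$ has exactly $4N+4$ zeros inside such a square.
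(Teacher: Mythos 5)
Your proof is correct and follows the same overall skeleton as the paper (multiplicity eight at the origin, reduction to $\cos(\mu a)\cosh(\mu a)=1$, interval analysis on the positive real axis, symmetry under $\mu\mapsto i\mu$ and $\mu\mapsto-\mu$), but it differs in two substantive places. First, for the key step that every nonzero zero lies on the real or imaginary axis, the paper does not compute: it observes that $-2\mu^4f(\mu)$ is the characteristic function of the self-adjoint, nonnegative problem $y^{(4)}=\tau y$ with $y''=y^{(3)}=0$ at both ends, so $\tau=\mu^4\ge0$ and $\mu$ is a fourth root of a nonnegative real. Your identity $1-\cos(\mu a)\cosh(\mu a)=-\sinh^{2}\bigl(\tfrac{(1+i)\mu a}{2}\bigr)-\sinh^{2}\bigl(\tfrac{(1-i)\mu a}{2}\bigr)$, combined with the strict monotonicity of $t\mapsto\sinh^{2}t-\sin^{2}t$, is a self-contained replacement in exactly the spirit of the paper's own proof of its Lemma 4.1 (Case 3); it buys independence from operator-theoretic input at the cost of a slightly longer computation. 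Second, on $\bigl[\tfrac{2m\pi}{a},\tfrac{(2m+1/2)\pi}{a}\bigr]$ the paper asserts that the product of the decreasing positive factor $\cos(\mu a)$ and the increasing positive factor $\cosh(\mu a)$ is monotone, which does not follow and is in fact false there (the product first rises above $\cosh(2m\pi)$ and then falls to $0$); your analysis via the sign change of the bracket $\cos(\mu a)\tanh(\mu a)-\sin(\mu a)$, whose derivative $-a\tanh(\mu a)[\sin(\mu a)+\cos(\mu a)\tanh(\mu a)]$ is nonpositive on that interval, is the correct way to get existence, uniqueness and simplicity of the root of $f=1$, so on this point your argument actually repairs a gap in the paper. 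The only loose end is the indexing: as in the paper, the identification of the linear term $(2k-5)\tfrac{\pi}{2a}$ rests on a labelling convention tied to the eightfold zero at the origin (your Rouch\'e count of $4N+4$ zeros in the square of side $2N\pi/a$ is the right consistency check), and you should be aware that with the positive zeros sitting near $(2j+1)\tfrac{\pi}{2a}$, $j\ge1$, the formula as stated forces the enumeration to begin one index later than the origin's four "eigenvalue slots" would naively suggest; this is a bookkeeping matter inherited from the paper rather than a flaw in your argument.
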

\begin{proof}
It is easy to see that 0 is a zero of $\phi$ of multiplicity 8.  Next we  find the zeros of $\phi$ on the positive real axis. Let $f(\mu)= \cos(\mu a)\cosh(\mu a)-1$ and $I_{m,j}=[(2m+\frac j2)\frac \pi a, (2m+\frac{j+1}2)\frac\pi a]$, $m=0,1,\dots$, $j=0,1,2,3$. The zeros of $\phi$ are the zeros of $f$. It  is obvious that for all $m$ and $\mu\in I_{m,1}\cup I_{m,2}$, $f(\mu)\le 1$. On $I_{m,1}$, $\mu\mapsto \cos(\mu a)$ is decreasing and positive, while $\mu\mapsto \cosh(\mu a)$ is increasing and positive, so that $f$ is increasing. At the endpoints of this interval, $f$ has the values $f(2m\frac\pi a)=\cosh (2m\pi)-1>0$ and $f((2m+\frac12)\frac\pi a)=-1<0$. Hence $f$ has exactly one simple zero on $I_{m,0}$.  From $f''(\mu)=-2a^2\sin(\mu a)\sinh(\mu a)$ we see that $f$ is strictly convex on $I_{m,3}$
with $f((2m+\frac32)\frac{\pi}a)=-1<0$ and $f((2m +2)\frac{\pi}a)=\cosh( (2m+2)\pi)-1>0$. Hence $f$ has exactly one simple zero on $I_{m,3}$. Since $\dfrac 1{\cosh(\mu a)}\to 0$ as $\mu \to\infty$, we
have
\begin{align*}\tilde{\mu}^1_m=\left(2m+\frac12\right)\frac{\pi}{a}+o(1) \text{ and }
\tilde{\mu}^2_m=\left(2m+\frac32\right)\frac{\pi}{a}+o(1),\quad m=0,1,\dots.\end{align*}
The location of the zeros on the other three half-axes follows by repeated  application of $\phi (i\mu)=\phi (\mu)$.

To complete the proof we show that all zeros of $\phi$ lies on  the real or the imaginary axis. Define the eigenvalue problem
\begin{align}\label{flexLemEq0}y^{(4)}=\tau y, ~y''(0)=0,~y^{(3)}(0)=0,~y''(a)=0, ~y^{(3)}(a)=0.\end{align} The substitution of $\tau=\mu^4$ shows that $\mu\mapsto -2\mu^4f(\mu)$  is the characteristic function of the problem \eqref{flexLemEq0}. Hence the zeros of $f$ are fourth roots of   nonnegative real numbers, which means that all zeros of $f$ are real or pure imaginary. \end{proof}
\begin{prop}\label{flexprop}
For $g=0$, $\beta_3=\beta_4=0$,   $p_1=p_3=2$ and $p_2=p_4=3$, there exists a positive integer $k_0$ such that  the eigenvalues $\hat{\lambda}_k$, $k\in\mathbb{Z}$,
counted with multiplicity, of the problem \eqref{eq:2inteq0}--\eqref{case1},   can be indexed in such a way that the eigenvalues $\hat{\lambda}_k$ are real and satisfy $\hat{\lambda}_{-k}=- \hat{\lambda}_k $. For $k>0$, we can write
$\hat{\lambda}_k=\hat{\mu}_k^2$, where  the $\hat{\mu}_k$  have the following asymptotic representation  as $k\to \infty $:\\
$$\hat\mu_{k}= (2k-5)\frac{\pi}{2a}+o(1).$$
\end{prop}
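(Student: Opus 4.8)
The plan is to deduce Proposition~\ref{flexprop} from Lemma~\ref{fexphiLem} together with the Birkhoff regularity established in Section~\ref{penc} for the flexible-missile boundary conditions, exactly in the spirit of the argument preceding Proposition~\ref{Class1phizeroProp}. First, since $\beta_3=\beta_4=0$, the operator pencil degenerates to $L(\lambda)=\lambda^2 M-A(U)$ on $L_2(0,a)$ as in \eqref{OperaPencil0}, and the eigenvalues of the problem are, via $\lambda=\mu^2$, precisely the squares of the zeros of the characteristic function $\phi$ given by \eqref{flexChar}, counted with multiplicity, by \cite[Theorem 3.1.2]{men-mol}. Lemma~\ref{fexphiLem} already gives the complete zero set of $\phi$ for $g=0$: a zero of multiplicity $8$ at the origin and the simple zeros $\pm\tilde\mu_k$, $\pm i\tilde\mu_k$ with $\tilde\mu_k=(2k-5)\frac{\pi}{2a}+o(1)$. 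Because the zeros lie on the real and imaginary axes and occur in the symmetric quadruples $\{\tilde\mu_k,-\tilde\mu_k,i\tilde\mu_k,-i\tilde\mu_k\}$, the numbers $\tilde\mu_k^2$ are real, and for each such quadruple we get the pair of real eigenvalues $\pm\tilde\mu_k^2$; hence the eigenvalues for $g=0$ can be indexed as $\hat\lambda_k$ with $\hat\lambda_{-k}=-\hat\lambda_k$ and $\hat\lambda_k=\hat\mu_k^2$, $\hat\mu_k=(2k-5)\frac{\pi}{2a}+o(1)$ for $k\ge k_0$.

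Next I would pass from $g=0$ to general $g\in C^1[0,a]$ by a Rouché argument identical in structure to the one used in Section~\ref{gen:g}. Let $D$ be the characteristic function of the flexible-missile problem with respect to the fundamental system $\{y_j\}$ normalized by the quasi-derivatives, and $D_0$ the one for $g=0$; then $D_0$ has the same zeros as $\phi$ from \eqref{flexChar}, counted with multiplicity. Since the problem is Birkhoff regular (as noted after \eqref{allclasses}, the left conditions satisfy $C(1,0)$ and the right conditions satisfy $C(1,1)$), the term $g$ only perturbs lower-order parts of $D$, so on the large squares $S_k$ and on small squares of fixed radius $\varepsilon$ around the zeros $\pm\tilde\mu_k$, $\pm i\tilde\mu_k$ of $D_0$ one has $|D(\mu)-D_0(\mu)|<|D_0(\mu)|$ for $|\mu|$ large. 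Rouché's theorem then gives that $D$ has, for $|k|$ large, exactly one simple zero $\hat\mu_k$ in the square about $\tilde\mu_k$ with the same leading asymptotics $\hat\mu_k=(2k-5)\frac{\pi}{2a}+o(1)$, and no other large zeros, so that $\hat\lambda_k=\hat\mu_k^2$ account for all eigenvalues.

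Finally I would argue that the perturbed eigenvalues remain real and retain the symmetry $\hat\lambda_{-k}=-\hat\lambda_k$. The cleanest route is the self-adjointness observation already available: for $\beta_3=\beta_4=0$ the operators $K$ and the eigenvalue-dependent part vanish, $M=I$, and $A(U)$ is the self-adjoint realization of $y\mapsto y^{(4)}-(gy')'$ with the boundary conditions $y''=y^{(3)}=0$ at both endpoints (a symmetric, indeed self-adjoint, boundary value problem since $g$ is real-valued); hence $\lambda^2$ is real for every eigenvalue $\lambda$, giving real $\hat\lambda_k=\hat\mu_k^2$. The symmetry $\phi(i\mu)=\phi(\mu)$ for $g=0$ (and its counterpart for $D$, or more robustly the observation that $\lambda\mapsto-\lambda$ and $\mu\mapsto i\mu$ preserve the equation and boundary conditions up to the sign of $\tau=\mu^4$) then pairs each positive eigenvalue with its negative, yielding the indexing $\hat\lambda_{-k}=-\hat\lambda_k$.

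The main obstacle is the middle step: justifying rigorously that the inequality $|D(\mu)-D_0(\mu)|<|D_0(\mu)|$ holds on the relevant contours. This requires the uniform asymptotic fundamental system of \eqref{eq54can}--\eqref{eq36can*} to control $D$ near the squares $S_k$ and $R_k$, together with periodicity-type lower bounds for $|D_0|$ on those squares analogous to the estimates on $\tan(\mu a)+\tanh(\mu a)$, $\cos(\mu a)$, $\cosh(\mu a)$ carried out in the proof of Proposition~\ref{Class1phizeroProp}; here the relevant factor is $1-\cos(\mu a)\cosh(\mu a)$, whose zeros cluster near $\mu a\in\{(2m+\tfrac12)\pi,(2m+\tfrac32)\pi\}$, so one must check that $|1-\cos(\mu a)\cosh(\mu a)|$ is bounded below on the chosen squares uniformly in $k$, which follows from $|\cosh(\mu a)|\ge|\sinh(\Re\mu\,a)|$ and a lower bound for $|\cos(\mu a)|$ away from its zeros.
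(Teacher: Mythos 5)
Your first paragraph is exactly the paper's argument: since $\beta_3=\beta_4=0$ (and $g=0$) there is no perturbation term at all, so $\phi=\phi_0$ and the eigenvalue distribution is read off directly from Lemma~\ref{fexphiLem}, with each quadruple $\{\pm\tilde\mu_k,\pm i\tilde\mu_k\}$ producing the real pair $\pm\tilde\mu_k^2$. Note, however, that Proposition~\ref{flexprop} is stated only for $g=0$, so your second and third paragraphs (the Rouch\'e passage to general $g$ and the self-adjointness argument), including the ``main obstacle'' you flag, are not needed here --- they belong to the later step leading to Theorem~\ref{flexThm}, and for the statement as given your proof is complete after the first paragraph.
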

Note that in this case, there is no perturbed term. Hence $\phi_1(\mu)=0$  and $\phi(\mu)=\phi_0(\mu)$.

The characteristic function of \eqref{eq:2inteq0}, \eqref{case1}, in this case,  is
$$D(\mu)=\det(\gamma_{j,k}\exp(\varepsilon_{j,k}))_{j,k=1}^4,$$ where
\begin{align*}
&\varepsilon_{1,k}=\varepsilon_{2,k}=0, ~ \varepsilon_{3,k}=\varepsilon_{4,k}=i^{k-1}\mu a,~
\\& \gamma_{1,k}=\delta_{k,2}(0,\mu),~ \gamma_{2,k}=\delta_{k,3}(0,\mu)-g(0)\delta_{k,1}(0,\mu),\\
&\gamma_{3,k}=\delta_{k,2}(a,\mu), \,
 \gamma_{4,k}=\delta_{k,3}(a,\mu)-g(a)\delta_{k,1}(a,\mu).\end{align*}
We are going to calculate of the functions $\psi_1$ and $\psi_4$ respectively defined in   \eqref{eq0bndg31}  and \eqref{eq1bndg31}.   A straightforward calculation gives
\begin{align}\label{flexpsi11} \gamma _{13}\gamma _{24}-\gamma _{23}\gamma _{14}&=-(1-i)\mu^5+\frac34(1+i)g(0)\mu^3+o(\mu^3)
,\\\label{flexpsi41}\gamma _{12}\gamma _{23}-\gamma _{22}\gamma _{13}&= (1+i)\mu^5-\frac34(1-i)g(0)\mu^3+o(\mu^3),
\end{align}
\begin{align}\nonumber\gamma _{31}\gamma _{42}-\gamma _{32}\gamma _{41}
&=(1-i)\mu^5+\frac12iG(a)\mu^4\allowdisplaybreaks\\\label{flexeq02bndg31}&\quad 
-\frac1{16}(1+i)\left(G^2(a)+12g(a)\right)\mu^3 +o(\mu^3),\allowdisplaybreaks\\ \nonumber
\gamma _{31}\gamma _{44}-\gamma _{34}\gamma _{41}
&=(1+i)\mu^5+\frac12iG(a)\mu^4\allowdisplaybreaks\\\label{flexeq12bndg31}&\quad -\frac1{16}(1-i)(G^2(a)+12g(a))+o(\mu^3).\allowdisplaybreaks
\end{align}
Therefore it follows from \eqref{eq0bndg31} and \eqref{eq1bndg31} that
 \begin{align}\psi_1(\mu)&= 2i\mu^{10}+\frac12(1+i)G(a)\mu^{9}\nonumber\\&\qquad+\frac18(G^2(a)+12(g(0)+g(a))\mu^{8}+o(\mu^{8}),\label{Flexpsi1}\\
  \psi_4(\mu)&=2i\mu^{10}-\frac12(1-i)G(a)\mu^{9}\nonumber\\&\qquad+\frac18(G^2(a)+12(g(0)+g(a))\mu^{8}+o(\mu^{8}). \label{flexpsi4}\end{align}
Using \eqref{eq2bndg31}--\eqref{eq9*bndg31} and applying to Proposition \ref{flexprop} the same reasoning and calculations as for  Proposition \ref{Class1Prop}, we get  
\begin{thm}\label{flexThm} For $g\in C^{1}[0,a]$, there exists a positive integer $k_0$ such that  the eigenvalues ${\lambda}_k$,
$k\in\Z$   of the problem  describing the stability of a flexible missile   are    ${\lambda}_{-k}=- {\lambda}_k $, 
${\lambda}_k={\mu}_k^2$  for $k\ge k_0$ and the ${\mu}_k$  have the asymptotics
\[\mu_k=k\frac\pi a+\tau _0+\frac{\tau _1}k+\frac{\tau_2}{k^2}+o(k^{-2})\]
and the numbers $\tau _0$, $\tau _1$, $\tau _2$ are as follows: 
\begin{multline*}\tau _0= -\frac{5\pi}{2a}, \ \tau _1=\frac 14\,\frac{G(a)}{\pi },
\ \tau _2=\frac 5{8}\,\frac{G(a)}{\pi^2}+\frac 14\, \frac a{\pi^2}(5g(0)+3g(a)).\hfill\end{multline*}
 In particular,all the eigenvalues are real.
 \end{thm}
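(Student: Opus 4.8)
The plan is to follow, almost verbatim, the route used to prove Theorems~\ref{asym41*thm} and~\ref{Cl3asym41*thm}, now supplying the data of the flexible--missile problem. Since the problem is Birkhoff regular (as noted in Section~\ref{penc}), the perturbation argument of Section~\ref{gen:g} shows that for general $g\in C^{1}[0,a]$ the eigenvalues are small perturbations of those for $g=0$; in particular, Proposition~\ref{flexprop} (whose counting rests on Lemma~\ref{fexphiLem}) guarantees that, with the indexing fixed there, $\mu_k=k\frac{\pi}{a}+\tau_0+o(1)$ with $\tau_0=-\frac{5\pi}{2a}$. Since $\beta_3=\beta_4=0$, all four boundary conditions are $\lambda$--independent, so the characteristic determinant $D(\mu)=\det(\gamma_{j,k}\exp(\varepsilon_{j,k}))_{j,k=1}^{4}$, with the $\gamma_{j,k}$ of the flexible--missile problem, again has the structure~\eqref{eq2bndg11}; hence in the sector $\arg\mu\in[-\tfrac{3\pi}{8},\tfrac{\pi}{8}]$ the reduced function $D_1(\mu)=D(\mu)e^{-\omega_1\mu a}$ collapses, exactly as in~\eqref{eq8*bndg11}, to $\psi_1(\mu)+\psi_4(\mu)e^{-2i\mu a}$, where $\psi_1,\psi_4$ are the polynomials whose three leading coefficients appear in~\eqref{Flexpsi1} and~\eqref{flexpsi4}. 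Here the top $\mu$--power is $\gamma=10$ and both $\psi_1$ and $\psi_4$ have leading coefficient $2i$.

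Next I would substitute the ansatz~\eqref{eq2bndg31}, $\mu_k=k\frac{\pi}{a}+\tau(k)$ with $\tau(k)=\tau_0+\tau_1k^{-1}+\tau_2k^{-2}+o(k^{-2})$, into $D_1(\mu_k)=0$ written as in~\eqref{eq9*bndg31}, namely $\mu_k^{-10}\psi_1(\mu_k)+\mu_k^{-10}\psi_4(\mu_k)e^{-2i\tau(k)a}=0$, expand $\mu_k^{-1}$ by~\eqref{eq8bndg31} and $e^{-2i\tau(k)a}$ by~\eqref{eq9bndg31}, and compare the coefficients of $k^{0}$, $k^{-1}$, $k^{-2}$. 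The $k^{0}$ equation is $2i+2i\,e^{-2i\tau_0a}=0$, i.e.\ $e^{-2i\tau_0a}=-1$, which together with the indexing of Proposition~\ref{flexprop} gives $\tau_0=-\frac{5\pi}{2a}$ (so $e^{-2i\tau_0a}=e^{5\pi i}=-1$). Inserting this, the $k^{-1}$ equation becomes linear in $\tau_1$ with solution $\tau_1=\frac14\frac{G(a)}{\pi}$, and the $k^{-2}$ equation, into which the $\mu^{8}$--terms $\frac18\bigl(G^{2}(a)+12(g(0)+g(a))\bigr)$ of~\eqref{Flexpsi1} and~\eqref{flexpsi4} also feed, becomes linear in $\tau_2$ with solution $\tau_2=\frac58\frac{G(a)}{\pi^{2}}+\frac14\frac{a}{\pi^{2}}(5g(0)+3g(a))$. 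Since $G(a)=0$ and $g(0)=g(a)=0$ when $g\equiv0$, both $\tau_1$ and $\tau_2$ then vanish, consistently with the exponentially small error in Lemma~\ref{fexphiLem}.

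The symmetry $\lambda_{-k}=-\lambda_k$ and the reality of all eigenvalues I would obtain at the operator level. Because $\beta_3=\beta_4=0$, the pencil~\eqref{OperaPencil0} on $L_2(0,a)$ is $L(\lambda)=\lambda^{2}I-A(U)$, which is even in $\lambda$, so its spectrum is symmetric about the origin; moreover $y\mapsto y^{[4]}$ with the boundary conditions $y^{[2]}=y^{[3]}=0$ at $0$ and at $a$ makes $A(U)$ self-adjoint, and integration by parts (the boundary terms vanishing by those conditions) gives $\langle A(U)y,y\rangle=\int_0^{a}(|y''|^{2}+g|y'|^{2})\,dx$, which is nonnegative. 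Hence every eigenvalue $\lambda$ of the pencil satisfies $\lambda^{2}\in[0,\infty)$ and is real; the reality asserted in Proposition~\ref{flexprop} is the $g=0$ instance, and the indexing $\lambda_{-k}=-\lambda_k$, $\lambda_k=\mu_k^{2}$ follows as in Proposition~\ref{Class1Prop}. The only real labour is the $k^{-2}$ comparison in the second paragraph: one must keep the coefficients of $\mu^{10}$, $\mu^{9}$ and $\mu^{8}$ in $\psi_1$ and $\psi_4$ simultaneously and combine them with the second--order parts of~\eqref{eq8bndg31} and~\eqref{eq9bndg31}, which is the same ``lengthy but straightforward'' bookkeeping that produced Theorems~\ref{asym41*thm} and~\ref{Cl3asym41*thm}; no new idea is required.
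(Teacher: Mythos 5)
Your main computation follows exactly the route the paper takes: the paper literally proves Theorem~\ref{flexThm} by forming $D(\mu)=\det(\gamma_{j,k}\exp(\varepsilon_{j,k}))$ for the flexible--missile data, reducing to $D_1(\mu)=\psi_1(\mu)+\psi_4(\mu)e^{-2i\mu a}$ with the expansions \eqref{Flexpsi1}--\eqref{flexpsi4}, and then ``applying the same reasoning and calculations as for Proposition~\ref{Class1Prop}'', i.e.\ substituting \eqref{eq2bndg31} into \eqref{eq9*bndg31} and matching the coefficients of $k^{0}$, $k^{-1}$, $k^{-2}$. Your identification of $\gamma=10$, the condition $e^{-2i\tau_0 a}=-1$, and the resulting $\tau_0,\tau_1,\tau_2$ all agree with that, so the asymptotic part of your proposal is essentially the paper's proof.

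The one place you genuinely deviate is the reality claim, and there your argument has a gap. You assert that $\langle A(U)y,y\rangle=\int_0^a\bigl(|y''|^2+g|y'|^2\bigr)\,dx$ is nonnegative and conclude $\lambda^2\in[0,\infty)$, hence $\lambda\in\mathbb R$. But $g$ is only assumed to be a real-valued $C^1$ function; if $g$ is sufficiently negative somewhere, this quadratic form can take negative values, so nonnegativity of $A(U)$ does not follow. Self-adjointness alone gives only $\lambda^2\in\mathbb R$, i.e.\ $\lambda$ real \emph{or pure imaginary}, which is weaker than the stated conclusion. The paper instead argues reality from Lemma~\ref{fexphiLem} together with the location of the zeros of $\phi$ (for $g=0$ every zero $\mu$ is real or pure imaginary, so every $\lambda=\mu^2$ is real) combined with the self-adjointness of the problem from \cite[Theorem 1.2]{MolZin}. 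If you want to keep an operator-level argument you would need either a hypothesis such as $g\ge 0$ or an additional step ruling out negative eigenvalues of $A(U)$; otherwise you should fall back on the paper's route through Lemma~\ref{fexphiLem}. The symmetry $\lambda_{-k}=-\lambda_k$ from the evenness of $L(\lambda)=\lambda^2 I-A(U)$ is fine.
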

Note from Lemma \ref{fexphiLem} and the values of $\tau_0$, $\tau_1$ and $\tau_2$ in  Theorem \ref{flexThm} that the asymptotics of the zeros of $\phi(\mu)$ defined in \eqref{flexChar} are either real or pure imaginary. Hence the eigenvalues of the problem describing the stability of a flexible missile are all real.

Note as well that according to \cite[Theorem 1.2]{MolZin} the problem describing the stability of a flexible missile is self-adjoint and therefore its eigenvalues must necessary be real.

\textbf{Acknowledgement.} 
I would like to thank Prof Manfred M\"oller for fruitful discussions.   
%

\section*{References}

\bibliography{mybibfile}

\end{document}